\documentclass[a4paper]{article}
%\documentclass[twoside,twocolumn]{article}
%\documentclass[]{interact}
%\documentclass[AMS,STIX1COL]{WileyNJD-v2}

%\articletype{Research Article}%

%\received{16 April 2022}
%\revised{6 June 2016}
%\accepted{6 June 2016}

%\raggedbottom

%\input{tf.tex}
% Sonsuz sembolünü kullanabilmek için eklenmeli
%\DeclareUnicodeCharacter{0221E}{\ensuremath{_\infty}}
\usepackage{amsmath, amssymb, amsthm}
\usepackage{xcolor}
\usepackage{algorithm}
\usepackage{algpseudocode}
\usepackage{graphicx}
\usepackage{caption}
\usepackage{subcaption}
\usepackage[numbers]{natbib}

\newtheorem{lemma}{Lemma}
\newtheorem{theorem}{Theorem}
{}
\newtheorem{corollary}{Corollary}{}
{}
\newtheorem{definition}{Definition}{}
{}
\newtheorem{eexample}{Example}{}

\def\N{\mathbb{N}}

\def\R{\mathbb{R}}
\def\C{\mathbb{C}}

\newcommand{\norm}[1]{\left\lVert#1\right\rVert}
\newcommand{\abs}[1]{\left\lvert#1\right\rvert}
\newcommand{\mat}[1]{\begin{bmatrix}#1\end{bmatrix}}

\newcommand{\overbar}[1]{\mkern 1.5mu\overline{\mkern-1.5mu#1\mkern-1.5mu}\mkern 1.5mu}

\def\diag{\operatorname{diag}}
\def\rank{\operatorname{rank}}

\def\hmax{\overbar{h}}

\def\one{\textbf{1}}

\newcommand\Item[1][]{%
  \ifx\relax#1\relax  \item \else \item[#1] \fi
  \abovedisplayskip=0pt\abovedisplayshortskip=0pt~\vspace*{-\baselineskip}}

\title{Consensus of Double Integrator Multiagent Systems under Nonuniform Sampling and Changing Topology}
\date{December 2022}
\author{Ufuk Sevim \thanks{Control and Automation Engineering Department, Istanbul Technical University, Istanbul, Turkey } \\ ufuk.sevim@itu.edu.tr \and Leyla Goren-Sumer \footnotemark[1] \\ leyla.goren@itu.edu.tr }
%		(\email{ufuk.sevim@itu.edu.tr}, \email{leyla.goren@itu.edu.tr}).}
%	\and Leyla Goren-Sumer}

\begin{document}
	%\title{Consensus of Double Integrator Multiagent Systems under Nonuniform Sampling and Changing Topology}
	
	%\author[1]{Ufuk Sevim*}
	%\author[1]{Leyla Gören-Sümer}
	
	%\address[1]{\orgdiv{Control and Automation Engineering}, \orgname{Istanbul Technical University}, \orgaddress{\state{Istanbul}, \country{Turkey}}}
	
	%\corres{*\email{ufuk.sevim@itu.edu.tr}}
	\maketitle
	\begin{abstract}
	
%	\abstract[Summary]{
		This article considers consensus problem of multiagent systems with double integrator dynamics under nonuniform sampling. It is considered the maximum sampling time can be selected arbitrarily. Moreover, the communication graph can change to any possible topology as long as its associated graph Laplacian has eigenvalues in a given region, which can be selected arbitrarily. Existence of a controller that ensures consensus in this setting is shown when the changing topology graphs are balanced and has a spanning tree. Also, explicit bounds for controller parameters are given. A novel sufficient condition is given to solve the consensus problem based on making the closed loop system matrix a contraction using a particular coordinate system for general linear dynamics. It is shown that the given condition immediately generalizes to changing topology in the case of balanced topology graphs. This condition is applied to double integrator dynamics to obtain explicit bounds on the controller.
%	}
	\end{abstract}

%	\keywords{
%		Multiagent systems; consensus; nonuniform sampling; changing topology
%	}
%
%	\maketitle
	
	\section{Introduction}
The consensus problem of multi agent systems has received great attention in the last decade \citep{Qin2017}, mostly due to the broad application areas, such as mobile robot coordination \citep{Andreasson2014}, sensor network time synchronization \citep{Carli2008}, frequency synchronization in a microgrid \citep{Goldin2013} and many more \citep{Li2019}. Although general agent dynamics has got attention recently \citep{Zhang2017a, Gao2013, Zhang2014}, the study of double integrator agent dynamics is common \citep{Seuret2009a, Zhan2015a, Liu2010, Jesus2014, Lin2009a, Qin2012, Cheng2011}, partly because of their applicability to broad range of applications.

Many different aspects of the real world challenges are also studied extensively, such as consensus under switching topologies \citep{Zhang2017a}, consensus with communication delays \citep{Seuret2009a}, nonuniformly sampled-data consensus \citep{Zhan2015a}, asynchronous consensus \citep{Gao2010}, event-based consensus \citep{Zou2017}, partial state feedback consensus \citep{Abdessameud2013}, saturated input consensus \citep{Abdessameud2010} to name a few.

The study of nonuniform sampling and changing topology is particularly related to the topic of this paper, where sampling intervals and the communication topology cannot be determined reliably due to real-world constraints such as energy saving requirements, unreliable communication links and roaming agents. Although there are many works on either nonuniform sampling consensus or changing topology consensus, the number of studies that considers both aspects simultaneously is rather limited. In \citep{Zhan2015a} it is shown that consensus can be reached if the sampling intervals are constrained in a region that is determined by the max in-degree of the communication graph. However, the feasible sampling interval region gets smaller significantly when the max in-degree gets larger. In \citep{Ajwad2021} an Average Dwell Time approach is used to ensure the leader-following consensus. However, the set of switchable communication graphs must be known a priori which limits the usability of the method especially when the number of agents gets larger.

In this work we considered the double integrator consensus problem under nonuniform sampling where the maximum sampling time can be selected arbitrarily and the graph topology can change to any possible topology at any sampling time, provided that it is balanced, has a spanning tree and its associated graph Laplacian has eigenvalues in a given region, which can also be selected arbitrarily. We showed that a controller always exists that ensures consensus for this problem and give explicit bounds on the controller parameters. To the best of our knowledge, there is no study on double integrator consensus with these requirements.

We use the equivalent problem of stabilizing a subsystem of the overall system to solve the consensus problem for general linear dynamics, which is proven in \citep{Gao2013}. Therefore, existing stabilization results can be utilized to solve the consensus problem. In particular, we use a coordinate transformation that makes the overall system matrix a contraction for all possible sampling intervals. We also show that this approach immediately generalizes to changing topology in the case of balanced topology graphs. The obtained results for the general linear dynamics are then applied to the double integrator dynamics to find explicit bounds for the controller.

The rest of the paper is organized as follows. In section 2, we give necessary definitions and basic results that are used throughout the paper. In section 3, a novel sufficient condition is given to solve the consensus problem for general linear dynamics. In section 4, the given condition is applied to the double integrator dynamics to obtain explicit bounds for the controller parameters. In section 5, numerical examples are given to demonstrate the accuracy of the method. Finally, conclusions are given in section 6.

For a matrix $A \in \C^{n \times n}$, $\sigma(A)$ denotes the set of singular values and $\bar{\sigma}(A)$ denotes the maximum singular value of $A$. We say $A$ is a contraction if $\bar{\sigma}(A) < 1$. $\abs{\cdot}$ and $\norm{\cdot}$ represents some vector and matrix norms respectively. We use subscripts $i,j$ for the indices and $k \in \N$ for the discrete time instance where $\N$ represents the nonnegative integers.

	\section{Preliminaries}
\subsection{Graph Theory}
The network topology of a multiagent system with $N$ agents is represented by a graph which is defined as a pair $G = (V, E)$ where $V = \{v_1, \dots, v_N\}$ is the set of nodes and $E \subseteq V \times V$ is the set of edges. The nodes of the graph correspond to the agents and an edge $(v_i, v_j) \in E$ denotes a directed information flow from agent $j$ to agent $i$. We assume that the topology graph of the multiagent system is simple, i.e. there are no self-loops, that is $(v_i, v_i) \notin E, \forall i$, and there are no multiple edges between pair of nodes.

The neighbor set of node $i$ is the index set defined as $\mathcal{N}_i := \{j ~|~ (v_i, v_j) \in E \}$, that is only the nodes with edges incoming to $v_i$ are considered neighbors. A directed tree is a graph where every node has exactly one neighbor except the so-called root node, which has no neighbors. If a subset of the edges of a graph forms a directed tree, then the graph is said to have a spanning tree. If a graph has a spanning tree, then all of the nodes are reachable from the root by following the (directed) edges.

The weighted adjacency matrix $W = (w_{ij}) \in \R^{N \times N}$ represents the nonnegative weights associated with edges, where $w_{ij} > 0$ if $(v_i, v_j) \in E$ and $w_{ij} = 0$ if $(v_i, v_j) \notin E$. A graph is called balanced if $w_{ij} = w_{ji}, \forall i,j$, that is $W = W^T$ is symmetric.

The weighted in-degree of node $i$ is defined as $d_i := \sum_{j \in \mathcal{N}_i} w_{ij}$. The Laplacian matrix of a graph is defined as $L := D - W$ where $D := \diag\{d_i\}$. Obviously, $L \one = 0$ where $\one^T := \mat{1 & \dots & 1}$. Therefore 0 is an eigenvalue of $L$. By the Gershgorin circle theorem all eigenvalues of $L$ lies in the disc $\{z \in \C ~|~ \abs{z - d_{\max}} \leq d_{\max}\}$ where $d_{\max} := \max_i \{d_i\}$. Therefore, $\operatorname{Re}(\lambda) \geq 0$ for all eigenvalues $\lambda$ of $L$.

\begin{theorem}[\cite{Lewis2014}]
	The zero eigenvalue of $L$ is simple, i.e. it has algebraic multiplicity 1, if and only if graph $G$ has a spanning tree.
\end{theorem}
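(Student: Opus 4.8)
The plan is to prove both implications at once by tracking how the algebraic multiplicity of the zero eigenvalue decomposes over the strongly connected components (SCCs) of $G$. First I would order the vertices according to a topological ordering of the condensation (the directed acyclic graph whose nodes are the SCCs and whose edges record information flow between components). With this ordering $L = D - W$ becomes block lower-triangular: an inter-component edge $j \to i$ forces $\operatorname{comp}(j)$ to precede $\operatorname{comp}(i)$, which places the corresponding entry $-w_{ij}$ below the block diagonal. Consequently the characteristic polynomial factorizes as $\det(\lambda I - L) = \prod_k \det(\lambda I - L_{kk})$, so the algebraic multiplicity of $0$ is the sum of the multiplicities contributed by the diagonal blocks $L_{kk}$, one per SCC.

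Next I would classify the blocks. If $C_k$ is a source of the condensation (no incoming inter-component edges), then every in-neighbor of a node $i \in C_k$ lies in $C_k$, so $L_{kk}$ is exactly the Laplacian of the induced subgraph on $C_k$; it has zero row sums, hence $L_{kk}\one_{C_k} = 0$. To show that $0$ is \emph{algebraically} simple here I would invoke Perron--Frobenius: for $c \geq \max_i d_i$ the matrix $M_k := cI - L_{kk}$ is entrywise nonnegative and irreducible (its off-diagonal pattern is that of $W_{kk}$, and $C_k$ is strongly connected), and all of its row sums equal $c$, so $\rho(M_k) = c$ is a simple eigenvalue with positive eigenvector $\one_{C_k}$; transferring back through $L_{kk} = cI - M_k$ shows $0$ is a simple eigenvalue of $L_{kk}$. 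If instead $C_k$ is not a source, at least one of its nodes receives an edge from an upstream component, so the row sums of $L_{kk}$ are nonnegative and strictly positive in at least one row, while $L_{kk}$ remains irreducible; the standard theorem on irreducibly diagonally dominant matrices then yields that $L_{kk}$ is nonsingular, i.e. it contributes no zero eigenvalue.

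Finally I would close with the graph-theoretic equivalence ``$G$ has a spanning tree $\iff$ the condensation has a unique source component.'' If a root $r$ reaches all nodes then its component is the only source; conversely a unique source reaches every component (in a directed acyclic graph every vertex is reachable from some source), and any node of that source is a valid root. Combining this with the block analysis: a unique source contributes exactly one simple zero while every other block is nonsingular, so the algebraic multiplicity of $0$ equals $1$; whereas two or more sources contribute at least two zeros, giving multiplicity at least $2$. This establishes the claimed equivalence.

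I expect the main obstacle to be upgrading \emph{geometric} to \emph{algebraic} simplicity in the strongly connected base case: a weakly diagonally dominant singular $M$-matrix could in principle carry a nontrivial Jordan block, so the Perron--Frobenius step (identifying $c$ as the simple spectral radius of $M_k$ via the equal-row-sum property) is what must be carried out carefully, rather than merely noting $\rank L_{kk} = \abs{C_k} - 1$. The remaining work is bookkeeping: pinning down the edge/information-flow convention so that $L$ is genuinely block lower-triangular, and verifying the irreducible-diagonal-dominance hypotheses for the non-source blocks.
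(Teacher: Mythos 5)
The paper does not actually prove this statement: it is imported verbatim from \cite{Lewis2014} as a cited theorem, so there is no internal proof to compare yours against. On its own terms your argument is correct and complete, and it follows the standard route: permute to the Frobenius normal form so that $L$ is block lower-triangular over the strongly connected components, factor the characteristic polynomial over the diagonal blocks, apply Perron--Frobenius to $cI - L_{kk}$ on source components to get \emph{algebraic} (not merely geometric) simplicity of the zero eigenvalue via the equal-row-sum identification $\rho(cI - L_{kk}) = c$, invoke Taussky's theorem on irreducibly diagonally dominant matrices to show non-source blocks are nonsingular, and close with the equivalence between existence of a spanning tree and uniqueness of the source component of the condensation. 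You correctly flag the one genuinely delicate step (ruling out a nontrivial Jordan block at zero) and handle it properly. The remaining loose ends are degenerate cases and they all close: a singleton source component gives $L_{kk} = [0]$ (zero eigenvalue trivially simple, since self-loops are excluded), a singleton non-source component gives $L_{kk} = [d_i]$ with $d_i > 0$ (nonsingular), and the paper's convention that $w_{ij} > 0$ records information flowing from $j$ to $i$, with the root of a spanning tree being the unique node with no in-neighbor, is consistent with the ordering that makes $L$ block lower-triangular.
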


\subsection{Stability of Time-Varying Discrete Systems}
Stability of nonuniformly sampled systems can be analyzed by investigating the time-varying discrete systems. It is well-known that the stability conditions for these systems are not trivial \citep{Liberzon1999}. A sufficient stability result based on induced matrix norms is given in this subsection.

\begin{definition}
	The system
	\begin{equation}
		\label{eq:sd_ol}
		x_{k+1} = F_k x_k
	\end{equation}
	is globally asymptotically stable if, for any $x_0 \in \R^n$
	\begin{equation*}
		\lim_{k \to \infty} \abs{x_k} = 0
	\end{equation*}
	for some vector norm $\abs{\cdot}$.
\end{definition}

\begin{theorem}
	\label{thm:stability_norm}
	If there exists a matrix norm $\norm{\cdot}$ induced over some vector norm $\abs{\cdot}$ such that $\norm{F_k} < 1, \forall k \in \N$, then the system \eqref{eq:sd_ol} is globally asymptotically stable.
\end{theorem}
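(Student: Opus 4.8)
The plan is to exploit the defining property of an induced matrix norm, namely its compatibility with the underlying vector norm, and then to propagate the contraction estimate along the trajectory. First I would recall that if $\norm{\cdot}$ is induced by $\abs{\cdot}$, then $\abs{F_k x} \le \norm{F_k}\,\abs{x}$ for every $x \in \R^n$. Applying this to the recursion \eqref{eq:sd_ol} gives $\abs{x_{k+1}} \le \norm{F_k}\,\abs{x_k}$, and a straightforward induction yields the product bound
\begin{equation*}
	\abs{x_k} \le \left(\prod_{j=0}^{k-1} \norm{F_j}\right)\abs{x_0}, \qquad k \ge 1.
\end{equation*}

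The conclusion $\lim_{k\to\infty}\abs{x_k}=0$ then reduces to showing that the product of the norms tends to zero, and this is the step I expect to be the real obstacle. The pointwise strict inequalities $\norm{F_k}<1$ do not by themselves force the product to vanish: a sequence such as $\norm{F_k}=e^{-2^{-k}}$ stays below $1$ for every $k$ yet has a strictly positive infinite product. What is actually needed is a uniform contraction factor $\rho := \sup_{k\in\N}\norm{F_k} < 1$, for then $\abs{x_k}\le\rho^{k}\abs{x_0}\to 0$ geometrically.

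To secure such a $\rho$ I would appeal to a compactness argument tailored to the sampled-data setting in which this result is used. The matrices $F_k$ are parametrized by the sampling intervals, which range over a closed bounded set $[\hmin,\hmax]$, and by the admissible communication topologies, whose Laplacian eigenvalues are confined to a fixed bounded region. Since $\norm{F_k}$ depends continuously on these parameters and the parameter set is compact, the supremum $\rho$ is attained; it is strictly less than $1$ precisely when every admissible $F_k$ is a contraction, which promotes the pointwise hypothesis to the uniform bound required above.

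Finally I would invoke the equivalence of all norms on the finite-dimensional space $\R^n$: convergence $\abs{x_k}\to 0$ in the chosen norm is equivalent to convergence to the origin in every vector norm, so the system is globally asymptotically stable in the sense of the preceding definition, independently of the particular norm used to verify the hypothesis. The only delicate point throughout is the passage from the strict pointwise contraction to a uniform one; the remaining estimates are routine consequences of the submultiplicativity and compatibility of the induced norm.
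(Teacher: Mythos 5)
Your opening step is exactly the paper's entire proof: the paper sets $V(x_k):=\abs{x_k}$, derives $\abs{x_{k+1}}\le\norm{F_k}\abs{x_k}<\abs{x_k}$, and stops, declaring $V$ a Lyapunov function. You are right to be suspicious of stopping there. Strict monotone decrease of $\abs{x_k}$ only yields convergence to some nonnegative limit, not to zero, and your example $\norm{F_k}=e^{-2^{-k}}$ (realizable already with scalar $F_k$) shows that the limit can be positive. So the theorem as literally stated is false, and the defect you identified is a genuine gap in the paper's own argument, not a weakness peculiar to your write-up. The missing ingredient is precisely the uniform contraction factor $\rho:=\sup_k\norm{F_k}<1$ that you isolate; with it the product bound gives $\abs{x_k}\le\rho^k\abs{x_0}\to 0$ and the final appeal to norm equivalence is routine.

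Where your proposal itself runs into trouble is the compactness repair. In the setting where this theorem is actually invoked (Corollary \ref{cor:stability_nonuniform} and the consensus results built on it), the contraction hypothesis is imposed for all $h$ in the \emph{open} interval $(0,\hmax)$, and $F(h)+G(h)K\to I$ as $h\to 0^{+}$, so $\bar{\sigma}\left(T^{-1}\left(F(h)+G(h)K\right)T\right)\to 1$. The supremum over the admissible parameter set is therefore $1$ and is not attained below it; your argument would require either an additional standing assumption $h_k\ge\hmin>0$ (a compact interval $[\hmin,\hmax]$, which the paper does not impose), or a quantitative decay estimate such as $\norm{F_k}\le e^{-c h_k}$ combined with the paper's assumption $\sum_k h_k=\infty$. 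In short: your criticism is correct and sharper than the paper's proof, but the fix you sketch closes the gap only under hypotheses stronger than those stated.
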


\begin{proof}
	Let $V(x_k) := \abs{x_k}$. Then,
	\[ V(x_{k+1}) = \abs{x_{k+1}} \leq \norm{F_k} \abs{x_k} < \abs{x_k} = V(x_k). \]
	Hence $V(\cdot)$ is a Lyapunov function for the system \eqref{eq:sd_ol}.
\end{proof}

\begin{lemma}
	\label{lem:norm_similarity}
	Let $\norm{\cdot}$ be a matrix norm induced over some vector norm $\abs{\cdot}$. Then for any $A \in \R^{n \times n}$
	\begin{equation}
		\norm{A}_T := \norm{T^{-1} A T}
	\end{equation}
	is an induced norm over the vector norm
	\begin{equation}
		\abs{x}_T := \abs{T^{-1} x}
	\end{equation}
	where $x \in \R^n$ and $T \in \R^{n \times n}$ is an invertible matrix. 
\end{lemma}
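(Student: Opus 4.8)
The plan is to establish two claims: first, that $\abs{\cdot}_T$ genuinely defines a vector norm on $\R^n$; and second, that the matrix norm induced by $\abs{\cdot}_T$ coincides with the quantity $\norm{\cdot}_T$ given in the statement. Both reduce to exploiting the hypothesis that $T$ is invertible, so that $x \mapsto T^{-1}x$ is a linear bijection of $\R^n$.

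First I would verify the three norm axioms for $\abs{x}_T := \abs{T^{-1}x}$. Absolute homogeneity and the triangle inequality are inherited directly from $\abs{\cdot}$ together with the linearity of $x \mapsto T^{-1}x$; for instance $\abs{x+y}_T = \abs{T^{-1}x + T^{-1}y} \le \abs{T^{-1}x} + \abs{T^{-1}y} = \abs{x}_T + \abs{y}_T$. The only axiom where invertibility is essential is positive definiteness: $\abs{x}_T = 0$ forces $T^{-1}x = 0$, and since $T^{-1}$ is nonsingular this yields $x = 0$.

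The key step is to compute the matrix norm induced by $\abs{\cdot}_T$ and identify it with $\norm{A}_T$. By definition this induced norm is
\[ \sup_{x \ne 0} \frac{\abs{Ax}_T}{\abs{x}_T} = \sup_{x \ne 0} \frac{\abs{T^{-1}Ax}}{\abs{T^{-1}x}}. \]
I would then substitute $y := T^{-1}x$; because $T$ is invertible this is a bijection between nonzero $x$ and nonzero $y$, with $x = Ty$. The supremum therefore transforms into
\[ \sup_{y \ne 0} \frac{\abs{T^{-1}ATy}}{\abs{y}} = \norm{T^{-1}AT} = \norm{A}_T, \]
where the first equality is precisely the definition of the norm $\norm{\cdot}$ induced by $\abs{\cdot}$. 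This identifies $\norm{\cdot}_T$ as exactly the induced norm of $\abs{\cdot}_T$, which is what we want.

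I do not anticipate any serious obstacle: the whole content of the proof is the change of variables $y = T^{-1}x$, whose validity rests entirely on the bijectivity of $T$. The only point requiring a moment's care is checking that this substitution preserves the ``nonzero'' restriction in the supremum, which again follows from invertibility, since $x \ne 0 \iff y \ne 0$.
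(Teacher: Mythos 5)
Your proof is correct and follows essentially the same route as the paper's: the same change of variables $x = Ty$ in the supremum defining the induced norm, with the norm axioms for $\abs{\cdot}_T$ checked directly (the paper simply asserts these as easy). Your version is slightly more careful about positive definiteness and the nonzero restriction in the supremum, but there is no substantive difference.
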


\begin{proof}
	It is easy to see that $\abs{\cdot}_T$ is a vector norm. Then,
	\[
	\sup_{x \in \R^n} \frac{\abs{A x}_T}{\abs{x}_T} = \sup_{y \in \R^n} \frac{\abs{A T y}_T}{\abs{T y}_T} = \sup_{y \in \R^n} \frac{\abs{T^{-1} A T y}}{\abs{T^{-1} T y}} = \norm{A}_T \qedhere
	\]
\end{proof}

\begin{corollary}
	\label{cor:stability_sv}
	If there exists an invertible matrix $T \in \R^{n \times n}$ such that
	\begin{equation}
		\label{ineq:singular_value}
		\bar{\sigma}(T^{-1} F_k T) < 1, ~~ \forall k \in \N,
	\end{equation}
	then the system \eqref{eq:sd_ol} is globally asymptotically stable, where $\bar{\sigma}(\cdot)$ denotes the maximum singular value.
\end{corollary}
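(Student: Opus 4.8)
The plan is to recognize that the maximum singular value is itself an induced matrix norm, and then to feed the hypothesis directly into the machinery already assembled in Lemma \ref{lem:norm_similarity} and Theorem \ref{thm:stability_norm}. The single fact that makes everything click is the standard identity $\bar{\sigma}(A) = \norm{A}_2$, i.e. the maximum singular value of $A$ equals the matrix norm induced by the Euclidean vector norm $\abs{x}_2 = \sqrt{x^T x}$. Once this is in hand, the corollary is essentially a specialization of the preceding results rather than a new argument.

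Concretely, I would first fix $\norm{\cdot} = \norm{\cdot}_2$ (the spectral norm) as the induced norm in Lemma \ref{lem:norm_similarity}, paired with the underlying vector norm $\abs{\cdot} = \abs{\cdot}_2$. The lemma then guarantees that
\[
\norm{A}_T := \norm{T^{-1} A T}_2 = \bar{\sigma}(T^{-1} A T)
\]
is a matrix norm induced over the vector norm $\abs{x}_T := \abs{T^{-1} x}_2$. Here invertibility of $T$, which is assumed in the statement, is exactly what Lemma \ref{lem:norm_similarity} requires, so no extra hypotheses are needed. The point is that the quantity appearing in the corollary's inequality \eqref{ineq:singular_value} is literally $\norm{F_k}_T$ for this construction.

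With that identification, the hypothesis $\bar{\sigma}(T^{-1} F_k T) < 1$ for all $k \in \N$ reads precisely as $\norm{F_k}_T < 1$ for all $k \in \N$, where $\norm{\cdot}_T$ is a genuine induced matrix norm. I would then invoke Theorem \ref{thm:stability_norm} with this norm to conclude that the system \eqref{eq:sd_ol} is globally asymptotically stable, completing the proof. I do not anticipate a genuine obstacle: the only substantive step is the observation $\bar{\sigma}(\cdot) = \norm{\cdot}_2$, and the rest is bookkeeping that chains Lemma \ref{lem:norm_similarity} into Theorem \ref{thm:stability_norm}. If anything warrants a sentence of care, it is merely noting that $T$ being invertible is what licenses the use of the lemma, so that $\norm{\cdot}_T$ is well defined and induced.
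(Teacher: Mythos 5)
Your proposal is correct and follows exactly the paper's own argument: identify $\bar{\sigma}(\cdot)$ as the matrix norm induced by the Euclidean vector norm, apply Lemma \ref{lem:norm_similarity} to obtain the induced norm $\norm{\cdot}_T$, and conclude via Theorem \ref{thm:stability_norm}. The paper's proof is just a more compressed statement of the same chain of reasoning.
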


\begin{proof}
	It is well-known that the maximum singular value of a matrix is the induced matrix norm over Euclidean vector norm. Then the result follows from Lemma \ref{lem:norm_similarity}.
\end{proof}

%The inequality \eqref{ineq:singular_value} is equivalent to the common quadratic Lyapunov function condition \todo{give reference}, as shown in the following proposition.
%\begin{proposition}
%	There exists a $P > 0$ such that
%	\begin{equation}
%		F_k^T P F_k - P < 0, ~~ \forall k \in \N
%	\end{equation}
%	if and only if there exists an invertible $T \in \R^{n \times n}$ such that
%	\[ \bar{\sigma}(T^{-1} F_k T) < 1, ~~ \forall k \in \N. \]
%\end{proposition}
%\begin{proof}
%	($\Rightarrow$) Since $P > 0$, there exists invertible $U \in \R^{n \times n}$ such that $P = U^T U$. So,
%	\begin{align*}
%		F_k^T U^T U F_k - U^T U &< 0 \\
%		U^{-T} F_k^T U^T U F_k U^{-1} - I &< 0 \\
%		\lmax(U^{-T} F_k^T U^T U F_k U^{-1}) &< 1 \\
%		\bar{\sigma} (U F_k U^{-1}) &< 1
%	\end{align*}
%	Now, take $T = U^{-1}$. \\
%	($\Leftarrow$) Similarly, go backwards and take $P = T^{-T} T^{-1}$.
%\end{proof}

\subsection{Stability of Nonuniformly Sampled Systems}
Consider the continuous time system
\begin{equation}
	\label{eq:cont_ol}
	\dot{x}(t) = Ax(t) + Bu(t)
\end{equation}
where $x(t) \in \R^n$, $u(t) \in \R^m$, $A \in \R^{n \times n}$, $B \in \R^{n \times m}$, $\rank B = m \leq n$, and $(A, B)$ is stabilizable.

Define the sequence of sampling time instances $\{t_k\}_{k \in \N}$ where
\[ 0 = t_0 < t_1 < \dots < t_k < \dots \]
with $\lim_{k \to \infty} t_k = \infty$. We assume that sampling intervals $h_k := t_{k+1} - t_k$ are bounded, i.e. $h_k \in (0, \hmax), \forall k \in \N$ for some $\hmax > 0$.

Let $u(t) := K x(t_k), \forall t \in [t_k, t_{k+1})$ where $K \in \R^{m \times n}$ is the state feedback controller. Then the closed loop system becomes
\begin{equation}
	\label{eq:sd_cl}
	\dot{x}(t) = A x(t) + B K x(t_k), ~~ \forall t \in [t_k, t_{k+1}).
\end{equation}

%Consider the discretized model of \eqref{eq:cont_ol}
%\begin{equation}
%	\label{eq:discrete_ol}
%	x_{k+1} = F(h_k) x_k + G(h_k) u_k
%\end{equation}
%where $x_k := x(t_k), u_k := u(t_k)$, 
%\begin{equation} \label{eq:FhGh}
%F(h) := e^{A h} ~~ \text{and} ~~ G(h) := \left( \int_0^{h} e^{A \tau} d \tau \right) B.
%\end{equation}
The following result is standard:
\begin{lemma}
	\label{lem:discretization}
	Let $x_0 := x(t_0)$. Then the solutions of \eqref{eq:sd_cl} and
	\begin{equation}
		\label{eq:discrete_cl}
		x_{k+1} = \left( F(h_k) + G(h_k) K \right) x_k
	\end{equation}
	are the same at the sampling instances, i.e. $x_k = x(t_k), \forall k \in \N$, where
	\begin{equation} \label{eq:FhGh}
		F(h) := e^{A h} ~~ \text{and} ~~ G(h) := \left( \int_0^{h} e^{A \tau} d \tau \right) B.
	\end{equation}
\end{lemma}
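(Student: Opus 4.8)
My plan is to fix an arbitrary index $k$ and observe that on the interval $[t_k, t_{k+1})$ the feedback term $BKx(t_k)$ is a \emph{constant} vector, since $x(t_k)$ is frozen by the zero-order hold. Thus on each such interval the closed loop \eqref{eq:sd_cl} is an inhomogeneous linear time-invariant ODE $\dot{x}(t) = Ax(t) + w_k$ with constant forcing $w_k := BKx(t_k)$. The strategy is then to solve this ODE by the variation of constants (Duhamel) formula, evaluate the solution at the right endpoint $t_{k+1}$, identify the resulting map with $F(h_k) + G(h_k)K$, and finally chain the intervals together by induction on $k$.

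First I would take as the initial condition on the interval the value $x(t_k)$ and write the explicit solution
\[
x(t) = e^{A(t - t_k)} x(t_k) + \int_{t_k}^{t} e^{A(t - s)} \, ds \; BK x(t_k), \qquad t \in [t_k, t_{k+1}).
\]
Evaluating at $t = t_{k+1}$ and setting $h_k = t_{k+1} - t_k$ gives the first term $e^{A h_k} x(t_k) = F(h_k) x(t_k)$ directly. For the integral term I would perform the change of variables $\tau = t_{k+1} - s$, which turns $\int_{t_k}^{t_{k+1}} e^{A(t_{k+1} - s)} \, ds$ into $\int_0^{h_k} e^{A\tau} \, d\tau$, so that the forcing contribution becomes exactly $\bigl(\int_0^{h_k} e^{A\tau} d\tau\bigr) BK x(t_k) = G(h_k) K x(t_k)$. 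Combining the two pieces yields $x(t_{k+1}) = \bigl(F(h_k) + G(h_k)K\bigr) x(t_k)$.

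It remains to close the induction. The base case is the hypothesis $x_0 = x(t_0)$. For the inductive step, I would use continuity of the trajectory $x(\cdot)$ at the sampling instant: the left limit of the solution on $[t_k, t_{k+1})$ at $t_{k+1}$ serves as the initial condition for the next interval, so the per-interval computation above composes consistently. Assuming $x_k = x(t_k)$, the recursion \eqref{eq:discrete_cl} gives $x_{k+1} = (F(h_k)+G(h_k)K)x_k = (F(h_k)+G(h_k)K)x(t_k) = x(t_{k+1})$, completing the induction.

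Since the author has already flagged this as standard, I do not anticipate a genuine obstacle; the only points requiring care are the substitution in the convolution integral and the bookkeeping ensuring the trajectory is continuous across sampling instants so that the intervals concatenate correctly. Everything else is a routine application of the matrix exponential solution formula for linear systems with constant input.
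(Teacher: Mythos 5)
Your proposal is correct and follows essentially the same route as the paper: an induction on $k$ whose inductive step applies the variation-of-constants formula on $[t_k,t_{k+1}]$, uses the fact that $u$ is held constant at $Kx(t_k)$ there, and performs the substitution $\tau = t_{k+1}-s$ to identify the integral with $G(h_k)$. No gaps to report.
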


\begin{proof}
	We prove with induction. $x(t_0) = x_0$ by assumption. Assume that $x(t_k) = x_k$, so
	\begin{align*}
		x(t_{k+1}) &= e^{A (t_{k+1} - t_k)} x(t_k) + \int_{t_k}^{t_{k+1}} e^{A(t_{k+1} - \eta)} B u(\eta) d \eta \\
		&= e^{A h_k} x(t_k) + \left( \int_0^{h_k} e^{A \tau} d \tau \right) B u(t_k) \\
		&= F(h_k) x_k + G(h_k) K x_k \\
		&= x_{k+1} \qedhere
	\end{align*}
\end{proof}

The following corollary follows as a result of Lemma \ref{lem:discretization} and Corollary \ref{cor:stability_sv}:
\begin{corollary} \label{cor:stability_nonuniform}
	The closed loop system \eqref{eq:sd_cl} is globally asymptotically stable for an arbitrary selection of sampling intervals $h_k \in (0, \hmax), \forall k \in \N$ if there exists an invertible $T \in \R^{n \times n}$ such that
	\begin{equation} \label{ineq:stability_sv}
		\bar{\sigma} \left( T^{-1} \left( F(h) + G(h) K \right) T \right) < 1, ~~~~ \forall h \in (0, \hmax).
	\end{equation}
\end{corollary}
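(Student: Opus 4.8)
The plan is to chain together the two earlier results exactly as the section heading suggests: Lemma \ref{lem:discretization} converts the continuous sampled-data closed loop \eqref{eq:sd_cl} into an equivalent discrete-time recursion, and Corollary \ref{cor:stability_sv} certifies asymptotic stability of that recursion from a uniform contraction condition. First I would invoke Lemma \ref{lem:discretization} to identify the sampled values $x_k = x(t_k)$ with the trajectory of the discrete system \eqref{eq:sd_ol}, where I set $F_k := F(h_k) + G(h_k) K$. This is precisely the system \eqref{eq:sd_ol} with the matrix sequence determined by the realized sampling intervals $\{h_k\}$.

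Second, I would observe that the hypothesis \eqref{ineq:stability_sv} holds for \emph{every} $h \in (0, \hmax)$ with one fixed invertible $T$. Since each realized sampling interval satisfies $h_k \in (0, \hmax)$, specializing \eqref{ineq:stability_sv} at $h = h_k$ yields $\bar{\sigma}(T^{-1} F_k T) < 1$ for every $k \in \N$. The crucial point is the uniformity in $T$: a single coordinate transformation contracts $F_k$ for all admissible intervals at once, so no matter how the sampling sequence is chosen, the hypothesis \eqref{ineq:singular_value} of Corollary \ref{cor:stability_sv} is met for the sequence $\{F_k\}$. Applying that corollary gives $\lim_{k \to \infty} \abs{x_k} = 0$, hence $\lim_{k \to \infty} \abs{x(t_k)} = 0$ at the sampling instances.

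The one point needing care is passing from convergence at the sampling instances to a statement about the continuous trajectory, and I expect this to be the main (though mild) obstacle. On each interval $[t_k, t_{k+1})$ the solution of \eqref{eq:sd_cl} is that of a linear ODE driven by the frozen value $x(t_k)$, namely $x(t) = F(t - t_k) x_k + G(t - t_k) K x_k$ for $t \in [t_k, t_{k+1})$, by the same computation used in the proof of Lemma \ref{lem:discretization}. Because $t - t_k \le h_k < \hmax$, the maps $F(\cdot)$ and $G(\cdot)$ are bounded on $[0, \hmax]$ by a constant independent of $k$, so $\abs{x(t)} \le C \abs{x_k}$ uniformly in $k$. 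Combined with $\abs{x_k} \to 0$ this forces $\abs{x(t)} \to 0$ as $t \to \infty$, completing the argument. If the intended reading of ``globally asymptotically stable'' for \eqref{eq:sd_cl} is only the discrete notion introduced above, evaluated at the instances $t_k$, then this last step may be omitted and the conclusion is immediate from the first two paragraphs.
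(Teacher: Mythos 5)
Your proof is correct and takes exactly the route the paper intends: the paper gives no written proof for this corollary, stating only that it ``follows as a result of Lemma \ref{lem:discretization} and Corollary \ref{cor:stability_sv},'' which is precisely the chain you construct by setting $F_k := F(h_k) + G(h_k)K$ and applying the contraction condition with the single fixed $T$. Your closing paragraph on inter-sample behaviour (bounding $\abs{x(t)}$ by a uniform constant times $\abs{x_k}$ on each interval) is a careful extra step that the paper silently omits, and it is a correct and worthwhile addition.
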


Note that if $T$ satisfies \eqref{ineq:stability_sv}, then $\overbar{T} := TV$ also satisfies \eqref{ineq:stability_sv} for any orthogonal matrix $V^T V = I$.

\subsection{Bounds on Singular Values}
A Gershgorin type bound is given in \citep{Qi1984} along with some other simple estimates for singular values. Since we are only interested in the maximum singular value of a square matrix, a corollary is given as follows:
\begin{corollary}[of Theorem 2 in \citep{Qi1984}] \label{cor:gershgorin}
	Let $A = (a_{ij}) \in \C^{n \times n}$. Then,
	\begin{equation}
		\bar{\sigma}(A) \leq \max_i \left\{ s_i \right\}
	\end{equation}
	where $s_i := \max \{ r_i, c_i \}$, $r_i := \sum_{j=1}^{n} \abs{a_{ij}}$ and $c_i := \sum_{j=1}^{n} \abs{a_{ji}}$.
\end{corollary}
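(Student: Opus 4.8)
The plan is to read the right-hand side $\max_i s_i$ as $\max\{\norm{A}_\infty, \norm{A}_1\}$, where $\norm{A}_\infty = \max_i r_i$ is the matrix norm induced by the vector $\ell_\infty$ norm (the maximum absolute row sum) and $\norm{A}_1 = \max_i c_i$ is the one induced by the vector $\ell_1$ norm (the maximum absolute column sum), and then to squeeze the spectral norm $\bar{\sigma}(A) = \norm{A}_2$ between these two quantities via their geometric mean. So the whole statement reduces to the classical norm-interpolation inequality $\bar{\sigma}(A) \le \sqrt{\norm{A}_1\,\norm{A}_\infty}$ followed by one elementary estimate.

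First I would establish that interpolation bound self-containedly. Using $\bar{\sigma}(A)^2 = \rho(A^H A)$, the spectral radius of the Hermitian positive semidefinite matrix $A^H A$, I would invoke the two standard facts that the spectral radius is dominated by any induced matrix norm and that induced norms are submultiplicative, giving $\rho(A^H A) \le \norm{A^H A}_\infty \le \norm{A^H}_\infty\,\norm{A}_\infty$. Since conjugate transposition interchanges the roles of rows and columns while preserving moduli, $\norm{A^H}_\infty = \norm{A}_1$, and hence $\bar{\sigma}(A)^2 \le \norm{A}_1\,\norm{A}_\infty$.

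Finally I would close the argument with the elementary inequality $\sqrt{xy} \le \max\{x,y\}$ for $x,y \ge 0$, applied with $x = \norm{A}_1$ and $y = \norm{A}_\infty$, so that $\bar{\sigma}(A) \le \max\{\norm{A}_1, \norm{A}_\infty\} = \max_i \max\{c_i, r_i\} = \max_i s_i$, as claimed. I do not anticipate a genuine obstacle here, since every ingredient is a routine fact about induced norms; indeed, the statement is quoted as a specialization of the Gershgorin-type Theorem 2 of \citep{Qi1984}, so an equally valid route is simply to apply that theorem to the square matrix $A$ and maximize over the resulting discs. The only point that deserves a moment of care is the complex-matrix identity $\norm{A^H}_\infty = \norm{A}_1$, which rests on $\abs{\bar{a}_{ij}} = \abs{a_{ij}}$, and the observation that the geometric-mean step is exactly what turns the sharper bound $\sqrt{\norm{A}_1\,\norm{A}_\infty}$ into the simpler rowwise/columnwise maximum stated in the corollary.
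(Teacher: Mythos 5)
Your proof is correct, but it follows a genuinely different route from the paper's. The paper does not prove the corollary directly: it cites Theorem~2 of \citep{Qi1984} and then reproduces the underlying argument only in the block-matrix generalization (Lemma~\ref{lem:gershgorin_block}), namely the Gershgorin-type singular-vector argument --- take a singular pair $Ax=\sigma y$, $A^*y=\sigma x$, locate the component of maximal modulus among the entries of $x$ and $y$, and read off $\sigma\le r_i$ or $\sigma\le c_i$ accordingly. You instead go through the interpolation inequality $\bar{\sigma}(A)^2=\rho(A^HA)\le\norm{A^H}_\infty\norm{A}_\infty=\norm{A}_1\norm{A}_\infty$ and then relax the geometric mean to $\max\{\norm{A}_1,\norm{A}_\infty\}=\max_i\max\{c_i,r_i\}$; every step there is a standard fact about induced norms, and the final identification with $\max_i s_i$ is correct since maximizing $\max\{r_i,c_i\}$ over $i$ is the same as taking the larger of the two global maxima. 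Your route buys a strictly sharper intermediate bound, $\bar{\sigma}(A)\le\sqrt{\norm{A}_1\norm{A}_\infty}$ (Schur's bound), at no extra cost, and is arguably more self-contained. What the paper's singular-vector argument buys is that it transfers verbatim to the block-partitioned setting of Lemma~\ref{lem:gershgorin_block}, where $r_i$ and $c_i$ become sums of block singular values; adapting your norm-interpolation argument to that setting would require introducing and justifying block analogues of the $1$- and $\infty$-norms, which is less direct. Your closing remark that one could simply specialize Qi's Theorem~2 is exactly what the paper does for the scalar case.
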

%\begin{proof}
%	Let $x = \mat{x_1 & \dots & x_N}^T \in \C^n$ and $\abs{x}_2 = 1$. Then,
%	\begin{align*}
%		\abs{Ax}_2^2 &= \sum_{i=1}^{n} \abs{\sum_{j=1}^{n} a_{ij} x_j }^2 \\
%		&\leq \sum_{i=1}^{n} \left( \sum_{j=1}^{n} \abs{a_{ij} x_j } \right)^2 \\
%		&= \sum_{i=1}^{n} \left( \sum_{j=1}^{n} \abs{a_{ij}}^{1/2} \left( \abs{a_{ij}} \abs{x_j}^2 \right)^{1/2} \right)^2 \\
%		&\leq \sum_{i=1}^{n} \left( \left( \sum_{j=1}^{n} \abs{a_{ij}} \right)^{1/2} \left( \sum_{j=1}^{n} \abs{a_{ij}} \abs{x_j}^2 \right)^{1/2} \right)^2 \\
%		&= \sum_{i=1}^{n} \left( \sum_{j=1}^{n} \abs{a_{ij}} \right) \left( \sum_{j=1}^{n} \abs{a_{ij}} \abs{x_j}^2 \right) \\
%		&\leq r \sum_{i=1}^{n} \sum_{j=1}^{n} \abs{a_{ij}} \abs{x_j}^2 \\
%		&= r \sum_{j=1}^{n} \abs{x_j}^2 \left( \sum_{i=1}^{n} \abs{a_{ij}} \right) \\
%		&\leq rc \abs{x}_2 \\
%		&\leq \left( \max\{r, c\} \right)^2
%	\end{align*}
%\end{proof}

Corollary \ref{cor:gershgorin} is a fairly standard result and one can easily show this using other methods such as Hölder's inequality. We extend this result to block matrices using the proof idea in \citep{Qi1984}.

\begin{lemma} \label{lem:gershgorin_block}
	Let $A = (A_{ij}) \in \C^{nN \times nN}$ be a block matrix where each block is the same size, i.e. $A_{ij} \in \C^{n \times n}$. Then,
	\begin{equation}
		\bar{\sigma}(A) \leq \max_i \left\{ s_i \right\}
	\end{equation}
	where $s_i := \max \{ r_i, c_i \}$, $r_i := \sum_{j=1}^{n} \bar{\sigma}(A_{ij})$ and $c_i := \sum_{j=1}^{n} \bar{\sigma}(A_{ji})$.
\end{lemma}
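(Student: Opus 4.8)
The plan is to reduce the block bound to an eigenvalue estimate via the Hermitian dilation, exactly the device underlying the scalar proof in \citep{Qi1984}. First I would form the Hermitian matrix
\[ H := \mat{0 & A \\ A^* & 0} \in \C^{2nN \times 2nN}, \]
and recall the classical Jordan--Wielandt identity: the eigenvalues of $H$ are precisely $\pm \sigma$ for each singular value $\sigma$ of $A$ (this uses that $A$ is square), so that $\bar{\sigma}(A) = \rho(H)$, the spectral radius of $H$. The problem thus becomes bounding every eigenvalue of $H$ in modulus. Crucially, $H$ inherits the $n \times n$ block structure of $A$ with zero diagonal blocks, so this is naturally a \emph{block} Gershgorin estimate rather than a scalar one.

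Next I would establish the block-Gershgorin bound directly through an eigenvector argument. Let $\lambda$ be any eigenvalue of $H$ with eigenvector $v \neq 0$, partitioned into $2N$ subvectors $v_1, \dots, v_{2N} \in \C^n$ matching the block rows of $H$, and let $p$ be an index maximizing $\abs{v_p}$ in the Euclidean norm. If $p \leq N$, the $p$-th block-row equation of $Hv = \lambda v$ reads $\sum_{j=1}^{N} A_{pj}\, v_{N+j} = \lambda v_p$; applying the triangle inequality together with $\abs{A_{pj}\, v_{N+j}} \leq \bar{\sigma}(A_{pj})\abs{v_{N+j}} \leq \bar{\sigma}(A_{pj})\abs{v_p}$ yields $\abs{\lambda}\abs{v_p} \leq r_p \abs{v_p}$, hence $\abs{\lambda} \leq r_p \leq s_p$. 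If instead $p > N$, I would write $p = N+q$; the corresponding block row involves the blocks $(A^*)_{qj} = (A_{jq})^*$, whose spectral norm satisfies $\bar{\sigma}\big((A_{jq})^*\big) = \bar{\sigma}(A_{jq})$, and the identical estimate gives $\abs{\lambda} \leq c_q \leq s_q$.

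In either case $\abs{\lambda} \leq \max_i s_i$, and since $\lambda$ was an arbitrary eigenvalue of $H$, this gives $\rho(H) \leq \max_i s_i$, i.e.\ $\bar{\sigma}(A) \leq \max_i s_i$, as required. (Here the row/column sums $r_i, c_i$ run over the $N$ block columns/rows of $A$.)

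I expect the only genuinely delicate points to be (i) confirming the singular-value/eigenvalue correspondence for $H$, which pins down $\bar{\sigma}(A) = \rho(H)$, and (ii) the block-index bookkeeping that correctly pairs the top block rows with the row sums $r_i$ and the bottom block rows with the column sums $c_i$ (via $(A^*)_{qj} = (A_{jq})^*$). The norm inequalities themselves are routine, following from submultiplicativity of the spectral norm, $\abs{A_{pj}\, w} \leq \bar{\sigma}(A_{pj})\abs{w}$, and the triangle inequality; choosing $p$ as the maximal-norm block is precisely what collapses the steps $\abs{v_{N+j}} \leq \abs{v_p}$ into a clean scalar recursion, mirroring the scalar Gershgorin argument block for block.
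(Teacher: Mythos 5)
Your proof is correct and is essentially the same argument as the paper's: the eigenvalue equation $Hv=\lambda v$ for your Hermitian dilation, written out blockwise with $v=(y;x)$, is exactly the singular-vector pair system $Ax=\sigma y$, $A^*y=\sigma x$ that the paper uses directly, and the key step--choosing the Euclidean-maximal block among all $2N$ subvectors and applying the triangle inequality with $\bar{\sigma}(A_{ij})$ as the operator norm--is identical. The dilation merely repackages the same computation, at the small extra cost of having to invoke the Jordan--Wielandt eigenvalue correspondence.
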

\begin{proof}
	Let $\sigma$ be a singular value of $A$. Then there exists nonzero vectors
	\[x = \mat{x_1^T & \dots & x_{N}^T}^T \in \C^{nN} ~~\text{and}~~ y = \mat{y_1^T & \dots & y_{N}^T}^T \in \C^{nN} \]
	such that
	\[ \mat{A_{11} & \dots & A_{1N} \\ \vdots & \ddots & \vdots \\ A_{N1} & \dots & A_{NN}} \mat{x_1 \\ \vdots \\ x_N} = \sigma \mat{y_1 \\ \vdots \\ y_N} ~~\text{and}~~ \mat{A_{11}^* & \dots & A_{N1}^* \\ \vdots & \ddots & \vdots \\ A_{1N}^* & \dots & A_{NN}^*} \mat{y_1 \\ \vdots \\ y_N} = \sigma \mat{x_1 \\ \vdots \\ x_N} \]
	where $x_i, y_i \in \C^n, i = 1,\dots,N$ and $A^*$ is the conjugate transpose of $A$. Let $\alpha := \max \{ \abs{x_1}, \dots, \abs{x_N}, \abs{y_1}, \dots, \abs{y_N} \}$ where $\abs{\cdot}$ is the Euclidean norm. If $\alpha = \abs{y_i}$ for some $i$, then
	\[ \sigma y_i = \sum_{j=1}^{N} A_{ij} x_j ~~\text{implies}~~ \sigma \leq \sum_{j=1}^{n} \bar{\sigma}(A_{ij}) = r_i \]
	If $\alpha = \abs{x_i}$ for some $i$, then
	\[ \sigma x_i = \sum_{j=1}^{N} A_{ji}^* y_j ~~\text{implies}~~ \sigma \leq \sum_{j=1}^{n} \bar{\sigma}(A_{ji}) = c_i \]
	In any case $\sigma \leq \max\{r_i, c_i\} = s_i$. Since this holds for any singular value of $A$, we can conclude the proof.
\end{proof}

\begin{lemma} \label{lem:block_singular}
	Let $A, B \in \R^{n\times n}$ and
	\[ C := \mat{A & -B \\ B & A}. \]
	Then, $\sigma(C) = \sigma(A - jB) \cup \sigma(A + jB)$ where $\sigma(\cdot)$ is the set of singular values and $j = \sqrt{-1}$.
\end{lemma}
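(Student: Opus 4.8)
The plan is to produce one fixed unitary matrix that block-diagonalizes $C$ into the complex blocks $A + jB$ and $A - jB$, and then invoke invariance of singular values under unitary similarity together with the block-diagonal structure. The motivating observation is that the pattern $\mat{A & -B \\ B & A}$ is precisely the real $2n\times 2n$ representation of the complex matrix $A + jB$, so passing to the complex eigenbasis of the associated complex structure is exactly what should decouple the two halves.

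Concretely, I would first set
\[ U := \frac{1}{\sqrt 2}\mat{I & I \\ -jI & jI}, \]
where $I$ denotes the $n\times n$ identity and $j=\sqrt{-1}$, and check directly that $U^* U = I$, so that $U$ is unitary. Then I would evaluate $U^* C U$ block by block. The two off-diagonal blocks cancel (terms of the form $A - A$ and $jB - jB$ cancel), while the diagonal blocks collapse to $\tfrac12(2A + 2jB)$ and $\tfrac12(2A - 2jB)$, yielding
\[ U^* C U = \mat{A + jB & 0 \\ 0 & A - jB}. \]

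Finally I would assemble the conclusion from two standard facts. Because $U$ is unitary, $C$ and $U^* C U$ have the same singular values: indeed $(U^* C U)^*(U^* C U) = U^* (C^* C) U$ is similar to $C^* C$, hence has the same spectrum, and the singular values are the nonnegative square roots of that spectrum. Moreover, the singular values of a block-diagonal matrix are the union of the singular values of its diagonal blocks, since $(U^* C U)^*(U^* C U)$ is itself block diagonal with diagonal blocks $(A + jB)^*(A + jB)$ and $(A - jB)^*(A - jB)$. Combining the two gives $\sigma(C) = \sigma(A + jB) \cup \sigma(A - jB)$, which is the claim.

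There is essentially no obstacle beyond bookkeeping: the only genuine step is guessing the correct $U$, i.e. recognizing the complex structure embedded in the $2\times 2$ block pattern, after which everything is mechanical. As a sanity check, note that $A, B$ real give $A - jB = \overline{A + jB}$, and a complex matrix and its entrywise conjugate have identical singular values; hence the two sets in the union actually coincide, consistent with $C$ being real and its $2n$ singular values pairing up into identical pairs.
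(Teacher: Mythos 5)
Your proof is correct, but it takes a different route from the paper. The paper never constructs a unitary change of basis: it instead observes that $CC^T$ again has the block pattern $\mat{X & -Y \\ Y & X}$ (with $X = AA^T+BB^T$, $Y = BA^T-AB^T$) and applies the classical determinant identity $\det\mat{X & -Y \\ Y & X} = \det(X-jY)\det(X+jY)$ to the characteristic polynomial $\det(CC^T-\lambda I)$, factoring it as $\det\left((A-jB)(A-jB)^*-\lambda I\right)\det\left((A+jB)(A+jB)^*-\lambda I\right)$ and reading off the singular values. You instead exhibit the explicit unitary $U = \tfrac{1}{\sqrt2}\mat{I & I \\ -jI & jI}$ with $U^*CU = \mat{A+jB & 0 \\ 0 & A-jB}$ and invoke unitary invariance of singular values plus the block-diagonal rule; your block computations check out, and $U^*U=I$ holds. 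The two arguments are close cousins (the determinant identity the paper cites is typically proved by exactly your similarity), but yours is arguably the more transparent and self-contained version: it avoids having to notice the preserved block structure of $CC^T$ and it delivers the multiset equality of singular values directly, rather than through equality of characteristic polynomials. Your closing sanity check that the two sets actually coincide for real $A,B$ (since $A-jB = \overline{A+jB}$) is a nice addition that the paper does not make.
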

\begin{proof}
	It is well-known that
	\[ \det(C) = \det(A - jB) \det(A + jB). \]
	Noting that $CC^T$ has the same form as $C$, we have
	\begin{align*}
		\det(CC^T - \lambda I) &= \det \left( \mat{A & -B \\ B & A} \mat{A^T & B^T \\ -B^T & A^T} - \lambda I \right) \\
		&= \det \mat{AA^T + BB^T - \lambda I & AB^T - BA^T \\ BA^T - AB^T & AA^T + BB^T - \lambda I} \\
		&= \det \left( (AA^T + BB^T - \lambda I) - j(BA^T - AB^T) \right) \\
		&~~~~~~~~~~\det \left( (AA^T + BB^T - \lambda I) + j(BA^T - AB^T) \right) \\
		&= \det \left( (A-jB) (A^T+jB^T) - \lambda I \right) \\
		&~~~~~~~~~~\det \left( (A+jB) (A^T-jB^T) - \lambda I \right)
	\end{align*}
	The result follows from the definition of the singular values.
\end{proof}

	\section{Problem Formulation}
We first define the problem for fixed network topology and then we show that our approach immediately generalizes to changing topology in the case of balanced topology graphs. Consider the $N$-agent system on the network topology graph $G$ with identical dynamics
\begin{equation} \label{eq:cont_node}
	\dot{x}_i(t) = Ax_i(t) + Bu_i(t), ~~~~ i = 1,2,\dots,N
\end{equation}
where $x_i(t) \in \R^n$ is the state of agent $i$ and $u_i(t) \in \R^m$ is its control input. The control inputs $u_i(t)$ is said to solve the consensus problem asymptotically if they drive all the states to the same values for any initial states, i.e.,
\[ \lim_{t \to \infty} \left(x_i(t) - x_j(t)\right) = 0, ~~~~ \forall i,j = 1,\dots,N. \]

We select the control input as distributed local state feedback law, i.e. every agent only uses the information from its neighbors, under nonuniform sampling with controller matrix $K \in \R^{m \times n}$
\begin{equation} \label{eq:state_feedback}
	u_i(t) = K \sum_{j \in \mathcal{N}_i} w_{ij}(x_j(t_k) - x_i(t_k)), ~~~~ \forall t \in [t_k, t_{k+1})
\end{equation}
where $w_{ij}$ are the elements of the weighted adjacency matrix of the topology graph.

The overall closed-loop graph dynamics can be written as \citep[see][]{Lewis2014}
\begin{equation} \label{eq:cl_cont}
	\dot{x}(t) = (I_N \otimes A) x(t) - (L \otimes BK) x(t_k)
\end{equation}
where $x := \mat{x_1^T & x_2^T & \dots & x_N^T}^T \in \R^{nN}$ is the overall state vector, $L$ is the graph Laplacian matrix and $\otimes$ is the Kronocker product.

Let $z(t) := (M^{-1} \otimes I_n) x(t)$ where $M := \mat{\one & \overbar{M}}$ so that
\[ M^{-1} L M = \mat{0 & \ell^T \\ 0 & \overbar{L}} \]
where $\one := \mat{1 & 1 & \dots & 1}^T$. So, the closed loop system dynamics can be separated as
\begin{align}
	\label{eq:cl_cont_z_1} \dot{z}_1(t) &= A z_1(t) - (\ell^T \otimes BK) \xi(t_k) \\
	\label{eq:cl_cont_xi} \dot{\xi}(t) &= (I_{N-1} \otimes A) \xi(t) - (\overbar{L} \otimes BK) \xi(t_k)
\end{align}
where $z = \mat{z_1^T & \xi^T}^T$ and $\xi := \mat{z_2^T & \dots & z_N^T}^T$.

\begin{lemma}[\cite{Gao2013}] \label{lem:consensus_stability}
	Control law \eqref{eq:state_feedback} solves the consensus problem asymptotically if and only if system \eqref{eq:cl_cont_xi} is asymptotically stable. In other words
	\[ \lim_{t \to \infty} \xi(t) = 0 \iff \lim_{t \to \infty} (x_i(t) - x_j(t)) = 0 ~~~~ i,j = 1,2,\dots,N \]
\end{lemma}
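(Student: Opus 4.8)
The plan is to establish the biconditional by relating the consensus error to the transformed coordinate $\xi$, exploiting the structure of the coordinate change $M = \mat{\one & \overbar{M}}$. The key observation is that the first column of $M$ is the vector $\one$, which spans the consensus subspace (where all agents agree), while the remaining columns $\overbar{M}$ span a complementary subspace. Thus $\xi$ measures precisely the ``disagreement'' part of the state, and the claim is that the states synchronize if and only if this disagreement part vanishes.

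First I would make precise the relationship between the original coordinates $x$ and the transformed coordinates $z = (M^{-1} \otimes I_n) x$. Writing $z = \mat{z_1^T & \xi^T}^T$, I would invert the transformation to get $x = (M \otimes I_n) z = (\one \otimes I_n) z_1 + (\overbar{M} \otimes I_n) \xi$. The term $(\one \otimes I_n) z_1$ has the form $\mat{z_1^T & \dots & z_1^T}^T$, i.e. every agent's block equals $z_1$; this is exactly a consensus configuration. Consequently the pairwise differences $x_i - x_j$ depend only on the second term $(\overbar{M} \otimes I_n)\xi$. I would show that $x_i(t) - x_j(t)$ can be written as a fixed linear map applied to $\xi(t)$, so that $\xi(t) \to 0$ forces all differences to zero, giving the ``$\Leftarrow$'' direction directly.

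For the converse ``$\Rightarrow$'' direction, I would argue that if all pairwise differences tend to zero, then $x(t)$ approaches the consensus subspace $\range(\one \otimes I_n)$, which is the kernel of the disagreement map. Since $\overbar{M}$ has full column rank (its columns are linearly independent from $\one$ by construction of the invertible $M$), the map $\xi \mapsto (\overbar{M}\otimes I_n)\xi$ is injective, and one can recover $\xi$ from the differences via a left inverse. Hence vanishing differences imply $\xi(t)\to 0$. Combining this with Definition~1 and the discretization of Lemma~\ref{lem:discretization}, asymptotic stability of the $\xi$-subsystem \eqref{eq:cl_cont_xi} is equivalent to the consensus condition.

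The main obstacle I anticipate is handling the subtlety that \eqref{eq:cl_cont_z_1} and \eqref{eq:cl_cont_xi} are only partially decoupled: the $\xi$-dynamics evolve autonomously (they do not depend on $z_1$), but $z_1$ is driven by $\xi$ through the $\ell^T$ coupling term. The delicate point is to confirm that consensus does \emph{not} require $z_1$ itself to converge — indeed $z_1$ represents the common trajectory of the agents, which need not settle to a fixed point (e.g. for double integrators the agents may move together at constant velocity). I would therefore emphasize that consensus concerns only the \emph{differences} $x_i - x_j$, which live entirely in the $\xi$-component, so the possibly unbounded or non-convergent behavior of $z_1$ is irrelevant to the equivalence. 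This separation is precisely why the triangular structure of $M^{-1} L M$ is chosen, and verifying that this structure isolates the disagreement dynamics into \eqref{eq:cl_cont_xi} is the crux of the argument.
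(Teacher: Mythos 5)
The paper does not actually prove this lemma --- it is imported verbatim from the cited reference \cite{Gao2013} with no internal argument --- so there is no in-paper proof to compare against; your proposal has to stand on its own, and it essentially does. The decomposition $x = (\one \otimes I_n) z_1 + (\overbar{M} \otimes I_n)\xi$ is the right starting point: since $(e_i - e_j)^T \one = 0$ for the standard basis vectors $e_i \in \R^N$, every pairwise difference satisfies $x_i - x_j = \left( \left( (e_i - e_j)^T \overbar{M} \right) \otimes I_n \right)\xi$, which gives the direction ``$\xi \to 0 \Rightarrow$ consensus'' immediately. For the converse, your appeal to a left inverse is correct but is the one step you should write out: stacking the rows $(e_i - e_j)^T$ into a matrix $\Delta$ whose kernel is exactly $\operatorname{span}\{\one\}$, one checks that $\Delta \overbar{M} v = 0$ forces $\overbar{M} v \in \operatorname{span}\{\one\}$, and invertibility of $M = \mat{\one & \overbar{M}}$ then forces $v = 0$; hence $\Delta \overbar{M}$ is injective, $\xi$ is a continuous linear function of the collection of differences, and vanishing differences imply $\xi(t) \to 0$. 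Your closing remark --- that the $\xi$-dynamics are autonomous while $z_1$ is merely driven by $\xi$ and need not converge --- is exactly the point that makes the equivalence legitimate, and it is worth keeping in any written version. The only loose thread is the identification of ``system \eqref{eq:cl_cont_xi} is asymptotically stable'' with ``$\xi(t) \to 0$ for every initial condition''; this requires noting that the coordinate change $z = (M^{-1} \otimes I_n)x$ puts initial conditions in bijection, so quantifying over all $x(0)$ is the same as quantifying over all $(z_1(0), \xi(0))$. With those two details filled in, your argument is a complete and correct proof of the lemma the paper only cites.
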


Using Lemma \ref{lem:discretization}, stability of \eqref{eq:cl_cont_xi} can be analyzed using its discretized model.
\begin{align}
	\xi_{k+1} &= e^{(I_{N-1} \otimes A) h_k} \xi_k - \left( \int_0^{h_k} e^{(I_{N-1} \otimes A) \tau} d \tau \right) \left(\overbar{L} \otimes BK\right) \xi_k \nonumber \\
	&= \left( I_{N-1} \otimes e^{A h_k} \right) \xi_k - \left( I_{N-1} \otimes \left( \int_0^{h_k} e^{A \tau} d \tau \right) \right) \left(\overbar{L} \otimes BK\right) \xi_k \nonumber \\
	&= \left( I_{N-1} \otimes e^{A h_k} - \overbar{L} \otimes \left( \int_0^{h_k} e^{A \tau} d \tau \right) BK \right) \xi_k \nonumber \\
	&= \left( I_{N-1} \otimes F(h_k) - \overbar{L} \otimes G(h_k) K \right) \xi_k \label{eq:sys_all_dt}
\end{align}

Using Lemma \ref{lem:consensus_stability} and Corollary \ref{cor:stability_nonuniform}, we can conclude that if there exists an invertible $T \in \R^{n \times n}$ such that the transformed closed loop system matrix
\begin{align*}
	\hat{\Phi}(h) &:= \left(I_{N-1} \otimes T^{-1}\right) \left( I_{N-1} \otimes F(h) - \overbar{L} \otimes G(h) K \right) \left(I_{N-1} \otimes T\right) \\
	&\phantom{:}= I_{N-1} \otimes T^{-1} F(h) T - \overbar{L} \otimes T^{-1} G(h) K T\\
	&\phantom{:}= I_{N-1} \otimes \hat{F}(h) - \overbar{L} \otimes \hat{G}(h) \hat{K}
\end{align*}
is a contraction, i.e. $\bar{\sigma} (\hat{\Phi}(h)) < 1$, for all $h \in (0, \hmax)$, then control law \eqref{eq:state_feedback} solves the consensus problem asymptotically for arbitrary selection of sampling intervals, where
\[ \hat{F}(h) := T^{-1} F(h) T, ~~ \hat{G}(h) := T^{-1} G(h) ~~\text{and}~~ \hat{K} := KT. \]

Without losing generality, we can assume that $\overbar{L}$ is in Jordan form whose off-diagonals are arbitrarily small. Therefore $\hat{\Phi}(h) = \diag\{\hat{J}_i(h)\}$ is in the block diagonal form where each block is as follows:
\[ \hat{J}_i(h) := \mat{
	\hat{S}(h, \lambda_i) & \delta \hat{G}(h) \hat{K} & 0 & \dots & 0 \\
	0 & \hat{S}(h, \lambda_i) & \delta \hat{G}(h) \hat{K} & \dots & 0 \\
	0 & 0 & \hat{S}(h, \lambda_i) & \dots & 0 \\
	\vdots & \vdots & \vdots & \ddots & \vdots \\
	0 & 0 & 0 & \dots & \hat{S}(h, \lambda_i)
} \]
where $\lambda_i$ is a possibly complex eigenvalue of $\overbar{L}$ (see Lemma \ref{lem:block_singular} for real block diagonal case), $\hat{S}(h, \lambda_i) := \hat{F}(h) - \lambda_i \hat{G}(h) \hat{K}$ and $\delta > 0$ is arbitrarily small. Using Lemma \ref{lem:gershgorin_block}, we can see that
\[ \bar{\sigma} \left( \hat{\Phi}(h) \right) = \max_i \left\{ \bar{\sigma} \left( \hat{J}_i(h) \right) \right\} \leq \max_i \left\{ \bar{\sigma} \left( \hat{S}(h, \lambda_i) \right) + \delta \bar{\sigma} \left( \hat{G}(h) \hat{K} \right) \right\} < 1 \]
is sufficient to solve the consensus problem. Since $\delta$ is arbitrarily small and does not depend on the selection of $T$ or $K$, we can conclude that
\begin{equation} \label{ineq:main_consensus}
	\max_i \left\{ \bar{\sigma} \left( \hat{S}(h, \lambda_i) \right) \right\} < 1
\end{equation}
is sufficient. We summarize all the discussions above with the following theorem:
\begin{theorem} \label{thm:main_general}
	Let $(A,B)$ be stabilizable system dynamics of the agents and the topology graph has a spanning tree. Then, control law \eqref{eq:state_feedback} solves the consensus problem asymptotically if there exists an invertible matrix $T \in \R^{n \times n}$ such that
	\begin{equation}
		\bar{\sigma} \left( T^{-1} S(h, \lambda_i) T \right) < 1, ~~ \forall h \in (0, \hmax),~~ \forall i \in \{2, \dots, N\}
	\end{equation}
	where
	\begin{equation}
		S(h, \lambda) := e^{A h} - \lambda \left( \int_0^{h} e^{A \tau} d \tau \right) BK
	\end{equation}
	and $\lambda_i$ are the eigenvalues of the Laplacian matrix of the topology graph.
\end{theorem}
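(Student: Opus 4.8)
The plan is to assemble the theorem from the machinery already developed, since the statement is essentially a clean repackaging of the preceding derivation. First I would invoke Lemma~\ref{lem:consensus_stability} to replace the consensus requirement with asymptotic stability of the reduced subsystem~\eqref{eq:cl_cont_xi}; this is the step that legitimizes working with $\overbar{L}$ and its eigenvalues $\lambda_2,\dots,\lambda_N$ rather than the full Laplacian. Because $G$ has a spanning tree, the spanning-tree characterization (Theorem~1) guarantees that the zero eigenvalue of $L$ is simple, so the $\lambda_i$ appearing in the hypothesis are exactly the $N-1$ nonzero Laplacian eigenvalues and $\overbar{L}$ is genuinely $(N-1)\times(N-1)$. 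I would then discretize~\eqref{eq:cl_cont_xi} through Lemma~\ref{lem:discretization} to obtain the time-varying discrete system~\eqref{eq:sys_all_dt} with transition matrix $I_{N-1}\otimes F(h_k) - \overbar{L}\otimes G(h_k)K$.

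The second step is to apply the similarity-transformed contraction criterion of Corollary~\ref{cor:stability_nonuniform} with the block transformation $I_{N-1}\otimes T$. Conjugating by this matrix produces $\hat\Phi(h) = I_{N-1}\otimes \hat F(h) - \overbar{L}\otimes \hat G(h)\hat K$, and a direct expansion shows that the relevant block equals $\hat S(h,\lambda_i) = T^{-1}S(h,\lambda_i)T$, which identifies the theorem's hypothesis $\bar\sigma(T^{-1}S(h,\lambda_i)T)<1$ with $\bar\sigma(\hat S(h,\lambda_i))<1$. The remaining task is thus to bound $\bar\sigma(\hat\Phi(h))$ by the worst per-eigenvalue singular value $\max_i \bar\sigma(\hat S(h,\lambda_i))$.

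The crux is a non-diagonalizable $\overbar{L}$. I would bring $\overbar{L}$ to a Jordan form whose superdiagonal entries are reduced to an arbitrarily small $\delta>0$ via a diagonal rescaling of the Jordan basis, so that after this additional $P\otimes I_n$ transformation $\hat\Phi(h)$ becomes block diagonal, each Jordan block contributing an upper-bidiagonal block with $\hat S(h,\lambda_i)$ on its diagonal and $\delta\,\hat G(h)\hat K$ on its superdiagonal. Applying the block Gershgorin estimate of Lemma~\ref{lem:gershgorin_block} then gives $\bar\sigma(\hat\Phi(h)) \le \max_i \bar\sigma(\hat S(h,\lambda_i)) + \delta\,\bar\sigma(\hat G(h)\hat K)$. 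Since $\delta$ is a free parameter, the strict inequalities $\bar\sigma(\hat S(h,\lambda_i))<1$ leave room to absorb the $\delta$-term and keep $\bar\sigma(\hat\Phi(h))<1$, whereupon Corollary~\ref{cor:stability_nonuniform} delivers asymptotic stability of $\xi$ and hence consensus for arbitrary sampling sequences.

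The main obstacle I expect is precisely this final absorption step, which deserves care rather than a one-line dismissal. As $h\to 0^+$ one has $F(h)\to I$ and $G(h)\to 0$, so $\bar\sigma(\hat S(h,\lambda_i))\to 1$ while $\bar\sigma(\hat G(h)\hat K)\to 0$; the pointwise strict bound therefore degrades toward $1$ near the origin. I would need to verify that the product $\delta\,\bar\sigma(\hat G(h)\hat K)$ stays strictly below the slack $1-\bar\sigma(\hat S(h,\lambda_i))$ uniformly in $h$, exploiting that both quantities vanish at a comparable rate in $h$ so that a sufficiently small $\delta$ closes the gap. This is the only place where the open interval $(0,\hmax)$ and the structure of $G(h)$ are genuinely used, and it is what turns the block-Gershgorin perturbation argument from a formal manipulation into a rigorous one.
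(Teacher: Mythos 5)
Your proposal follows the paper's own derivation step for step: Lemma~\ref{lem:consensus_stability} to reduce consensus to stability of \eqref{eq:cl_cont_xi}, Lemma~\ref{lem:discretization} to pass to the discrete system \eqref{eq:sys_all_dt}, conjugation by $I_{N-1}\otimes T$ under Corollary~\ref{cor:stability_nonuniform}, a Jordan form of $\overbar{L}$ with superdiagonal entries rescaled to an arbitrarily small $\delta$, and Lemma~\ref{lem:gershgorin_block} to obtain $\bar{\sigma}(\hat{\Phi}(h)) \le \max_i \bar{\sigma}(\hat{S}(h,\lambda_i)) + \delta\,\bar{\sigma}(\hat{G}(h)\hat{K})$. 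This is essentially the same proof.

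The one point where you go beyond the paper is in refusing to treat the absorption of the $\delta$-term as a one-liner, and you are right to be uneasy: the paper's justification (``since $\delta$ is arbitrarily small and does not depend on $T$ or $K$'') is precisely the dismissal you warn against, because $\delta$ is fixed by the Jordan-basis rescaling before $h$ ranges over $(0,\hmax)$, while the pointwise hypothesis $\bar{\sigma}(\hat{S}(h,\lambda_i))<1$ supplies no uniform positive slack. Your own repair sketch is not yet a proof either: ``both quantities vanish at a comparable rate in $h$'' does not follow from the stated hypothesis --- one would need something like $1-\bar{\sigma}(\hat{S}(h,\lambda_i)) \ge c\,h$ near $h=0$, and separately a positive margin near $h=\hmax$, where $\bar{\sigma}(\hat{G}(h)\hat{K})$ does \emph{not} vanish but the hypothesis, being posed on an open interval, permits the slack to shrink to zero. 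Note that the entire issue evaporates when $\overbar{L}$ is diagonalizable, since then there are no $\delta$ terms at all; this covers the balanced case of Theorem~\ref{thm:main_symmetric} that the double-integrator results actually rely on. For a general $\overbar{L}$ with nontrivial Jordan blocks, both your argument and the paper's require a uniform strengthening of the singular-value hypothesis to close this step.
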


Consider the case of changing topology at each sampling instance with graph Laplacian matrices $L_k$. From the proof given in \citep{Gao2013}, it is easy to see that Lemma \ref{lem:consensus_stability} still applies. In the case of balanced graphs, i.e. $L_k = L_k^T$, one can select $M$ such that $\overbar{L}_k = \overbar{L}_k^T$ for all $k$. Indeed, this is true when $\overbar{M}^T \overbar{M} = I$ and $\overbar{M}^T \one = 0$.

Any symmetric matrix is orthogonally diagonalizable, so there exists orthogonal matrices $U_k^{-1} = U_k^T$ such that $U_k^T \overbar{L}_k U_k$ is diagonal. Also, multiplication by an orthogonal matrix does not change the singular values of a matrix. Since $U_k \otimes I_n$ is also orthogonal, we can assume $\overbar{L}_k$ is diagonal for all $k$. Therefore, we can reach the following result.

\begin{theorem} \label{thm:main_symmetric}
	Let $(A,B)$ be stabilizable system dynamics of the agents. Assume that the changing topology graphs are balanced and have a spanning tree at each nonuniform sampling instance. Then, control law \eqref{eq:state_feedback} solves the consensus problem asymptotically if there exists an invertible matrix $T \in \R^{n \times n}$ such that
	\begin{equation}
		\bar{\sigma} \left( T^{-1} S(h, \lambda) T \right) < 1, ~~ \forall h \in (0, \hmax),~~ \forall \lambda \in [\lambda_2, \lambda_N]
	\end{equation}
	where $\displaystyle \lambda_2 = \inf_{k,i} \lambda_i (\overbar{L}_k)$ and $\displaystyle \lambda_N = \sup_{k,i} \lambda_i (\overbar{L}_k)$.
\end{theorem}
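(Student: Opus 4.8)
The plan is to reduce the changing-topology consensus problem to the asymptotic stability of a single time-varying discrete system and then bound its contraction constant uniformly in $k$ using the hypothesis. First I would invoke Lemma~\ref{lem:consensus_stability}, which as noted continues to hold for changing topology, to replace the consensus requirement by asymptotic stability of the $\xi$-subsystem; the only change is that its driving matrix now carries the time-varying Laplacian block $\overbar{L}_k$. Choosing $M = \mat{\one & \overbar{M}}$ with $\overbar{M}^T \overbar{M} = I$ and $\overbar{M}^T \one = 0$, the identity $\one^T L_k = 0$ (which holds because $L_k = L_k^T$ and $L_k \one = 0$) gives $\overbar{L}_k = \overbar{M}^T L_k \overbar{M}$, so each $\overbar{L}_k$ is symmetric and has real spectrum. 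Discretizing exactly as in \eqref{eq:sys_all_dt} but with $\overbar{L}$ replaced by $\overbar{L}_k$ yields $\xi_{k+1} = \Phi_k \xi_k$ with $\Phi_k := I_{N-1} \otimes F(h_k) - \overbar{L}_k \otimes G(h_k) K$.

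Next I would apply the single fixed coordinate transformation $I_{N-1} \otimes T$ demanded by Corollary~\ref{cor:stability_sv}, obtaining $\hat{\Phi}_k := I_{N-1} \otimes \hat{F}(h_k) - \overbar{L}_k \otimes \hat{G}(h_k)\hat{K}$, and reduce the problem to proving $\bar{\sigma}(\hat{\Phi}_k) < 1$ for every $k$. The key step is the singular-value evaluation: for each fixed $k$ write the orthogonal diagonalization $\overbar{L}_k = U_k \Lambda_k U_k^T$ with $\Lambda_k = \diag\{\lambda_i(\overbar{L}_k)\}$, and conjugate $\hat{\Phi}_k$ by the orthogonal matrix $U_k \otimes I_n$. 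Because orthogonal conjugation preserves singular values, $\bar{\sigma}(\hat{\Phi}_k) = \bar{\sigma}\big(I_{N-1} \otimes \hat{F}(h_k) - \Lambda_k \otimes \hat{G}(h_k)\hat{K}\big)$, and the right-hand matrix is block diagonal with blocks $T^{-1} S(h_k, \lambda_i(\overbar{L}_k)) T$; hence $\bar{\sigma}(\hat{\Phi}_k) = \max_i \bar{\sigma}\big(T^{-1} S(h_k, \lambda_i(\overbar{L}_k)) T\big)$. Since every $\lambda_i(\overbar{L}_k)$ lies in $[\lambda_2, \lambda_N]$ by the definition of $\lambda_2, \lambda_N$ and $h_k \in (0, \hmax)$, the theorem hypothesis makes each block a contraction, so $\bar{\sigma}(\hat{\Phi}_k) < 1$ for all $k$. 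Theorem~\ref{thm:stability_norm}, applied through the fixed transform $I_{N-1}\otimes T$, then yields asymptotic stability of the $\xi$-system, and Lemma~\ref{lem:consensus_stability} delivers consensus.

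The main obstacle to get right is that the diagonalizing matrix $U_k$ genuinely depends on $k$, which at first sight conflicts with the requirement of a \emph{single} coordinate change valid for all sampling instants. The resolution, and the point I would argue carefully, is that $U_k$ must not be folded into the coordinate transformation at all: the fixed transform is $I_{N-1}\otimes T$, whereas $U_k \otimes I_n$ serves only as an auxiliary device to compute the number $\bar{\sigma}(\hat{\Phi}_k)$, and orthogonality of $U_k \otimes I_n$ guarantees this number is unchanged. This separation is precisely why symmetry of $\overbar{L}_k$ is essential here; it also lets me bypass the Jordan-form perturbation argument with the arbitrarily small $\delta$ used for Theorem~\ref{thm:main_general}, since diagonalizability makes the block decomposition exact and the eigenvalues real, so the admissible spectral set is the genuine real interval $[\lambda_2,\lambda_N]$.
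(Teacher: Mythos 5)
Your proposal is correct and follows essentially the same route as the paper: reduce to the $\xi$-subsystem via Lemma~\ref{lem:consensus_stability}, apply the fixed transform $I_{N-1}\otimes T$, and use the orthogonal diagonalization $\overbar{L}_k = U_k \Lambda_k U_k^T$ only as a singular-value-preserving device so that $\bar{\sigma}(\hat{\Phi}_k)$ equals the maximum of $\bar{\sigma}\bigl(T^{-1}S(h_k,\lambda_i(\overbar{L}_k))T\bigr)$ over $i$. Your explicit remark that $U_k$ is not part of the coordinate change, together with the observation that symmetry makes the block decomposition exact and avoids the $\delta$-perturbed Jordan argument of Theorem~\ref{thm:main_general}, is precisely the point the paper makes in the paragraph preceding the theorem.
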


	\section{Main Results}
Although Theorems \ref{thm:main_general} and \ref{thm:main_symmetric} are given for general system dynamics, it is hard to find $T$ and $K$ that satisfies it in general. However, we give explicit bounds on the controller parameters in the case of double integrator dynamics and balanced network topologies. The explicit bounds depend on the maximum sampling interval and minimum and maximum eigenvalues of all possible Laplacian matrices. These parameters can be selected arbitrarily depending on the application. Also, it is shown that a controller always exists for any selection of these parameters, as long as the network graph has a spanning tree, which is known to be a necessary condition for consensusability \citep{Ma2010}.

%If the network topology graphs are balanced, i.e. when the Laplacian matrix is symmetric, then the eigenvalues of the Laplacian matrices are real. However, there are other topologies that have real eigenvalues, such as directed tree graph \citep{Lewis2014}. In general, the controller given in this section can be used whenever the switching topology Laplacian eigenvalues is known to be real.

Consider double integrator agent dynamics, i.e.,
\[ A := \mat{0 & 1 \\ 0 & 0} ~~\text{and}~~ B := \mat{0 \\ 1} \]
Then,
%\[ S(h,\lambda) = \mat{1 & h \\ 0 & 1} - \lambda \mat{\frac{h^2}{2} \\ h} K = I + h \mat{1 \\ 0} \mat{0 & 1} - h \lambda \mat{\frac{h}{2} \\ 1} K \]
\[ S(h,\lambda) = \mat{1 & h \\ 0 & 1} - \lambda \mat{\frac{h^2}{2} \\ h} K \]
Also let the eigenvalues of the Laplacian matrix of the changing topology graphs be in the interval $[\lambda_2, \lambda_N]$.

We propose the following similarity transformation matrix
\begin{equation}
	T := \mat{-\mu_1 & -\mu_2 \\ 1 & 1} \mat{1 & 1 \\ -1 & 1} = \mat{\mu_2 - \mu_1 & -(\mu_2 + \mu_1) \\ 0 & 2}
\end{equation}
where $0 < \mu_1 < \mu_2$. Therefore,
\[ \hat{S} := T^{-1} S(h,\lambda) T = \mat{1 - \frac{h \lambda \gamma k_1}{2} & \frac{2 h}{\mu_2 - \mu_1} - \frac{h \lambda \gamma k_2}{2} \\ - \frac{h \lambda k_1}{2} & 1 - \frac{h \lambda k_2}{2}} \]
where
\[ \gamma := \frac{\mu_1 + \mu_2 + h}{\mu_2 - \mu_1} ~~\text{and}~~ \hat{K} := KT = \mat{k_1 & k_2}. \]
Using Corollary \ref{cor:gershgorin} we need
\begin{equation} \label{ineq:main}
	\max\{ \abs{s_{11}} + \abs{s_{12}}, \abs{s_{11}} + \abs{s_{21}}, \abs{s_{22}} + \abs{s_{12}}, \abs{s_{22}} + \abs{s_{21}} \} < 1
%	\begin{split}
%		\max\{ \abs{s_{11}} + \abs{s_{12}}, \abs{s_{11}} + \abs{s_{21}}, \abs{s_{22}} + \abs{s_{12}}, \abs{s_{22}} + \abs{s_{21}} \} < 1 \iff \\
%			\abs{s_{11}} + \abs{s_{12}} < 1 ~\&~ \abs{s_{11}} + \abs{s_{21}} < 1 ~\&~ \abs{s_{22}} + \abs{s_{12}} < 1 ~\&~ \abs{s_{22}} + \abs{s_{21}} < 1
%	\end{split}	
\end{equation}
to solve the consensus problem where $\hat{S} = (s_{ij})$. From \eqref{ineq:main}, we have 4 inequalities to be satisfied, since each of the sums must be smaller than 1.

First note that we have $h,\lambda > 0$, $\gamma > 1$ and $\mu_2 > \mu_1 > 0$. We also need $k_1, k_2 > 0$, as otherwise $s_{11}, s_{22} > 1$ so we cannot use the condition \eqref{ineq:main}.

We also require $s_{11}, s_{22} > 0$, since otherwise the lower limit for $k_1$ and $k_2$ can be large for small values of $h$. So, immediately we have the following conditions to be satisfied:
\begin{align*}
	s_{11} &= 1 - \frac{h \lambda \gamma k_1}{2} > 0 &&\Rightarrow \frac{2}{h \lambda \gamma} > k_1 \\
	s_{22} &= 1 - \frac{h \lambda k_2}{2} > 0 &&\Rightarrow \frac{2}{h \lambda} > k_2
\end{align*}

Since $s_{21}$ is already negative, we need to satisfy:
\begin{align*}
	\abs{s_{11}} + \abs{s_{21}} &= 1 - \frac{h \lambda (\gamma - 1) k_1}{2} < 1 &&\Rightarrow \text{Already satisfied} \nonumber \\
	\abs{s_{22}} + \abs{s_{21}} &= 1 - \frac{h \lambda}{2} (k_2 - k_1) < 1 &&\Rightarrow k_2 - k_1 > 0
\end{align*}

For $s_{12} > 0$ one can show that the resulting inequalities are inconsistent. Assuming $s_{12} < 0$ we have the final set of conditions to be satisfied as follows:
\begin{align*}
	s_{12} &= \frac{2 h}{\mu_2 - \mu_1} - \frac{h \lambda \gamma k_2}{2} < 0 &&\Rightarrow k_2 > \frac{4}{\lambda (\mu_1 + \mu_2 + h)} \\
	\abs{s_{11}} + \abs{s_{12}} &= 1 - \frac{2 h}{\mu_2 - \mu_1} + \frac{h \lambda \gamma}{2} (k_2 - k_1) < 1 &&\Rightarrow \frac{4}{\lambda (\mu_1 + \mu_2 + h)} > k_2 - k_1 \\
	\abs{s_{22}} + \abs{s_{12}} &= 1 - \frac{2 h}{\mu_2 - \mu_1} + \frac{h \lambda (\gamma - 1) k_2}{2} < 1 &&\Rightarrow \frac{4}{\lambda (2 \mu_1 + h)} > k_2
\end{align*}

%Putting all the conditions together we finally obtain the following inequalities:
%\begin{align*}
%	\frac{2 (\mu_2 - \mu_1)}{h \lambda (\mu_1 + \mu_2 + h)} &> k_1 > 0 \\
%	\min \left\{ \frac{2}{h \lambda}, \frac{4}{\lambda (2 \mu_1 + h)} \right\} &> k_2 > \frac{4}{\lambda (\mu_1 + \mu_2 + h)} \\
%	\frac{4}{\lambda (\mu_1 + \mu_2 + h)} &> k_2 - k_1 > 0
%\end{align*}

%The controller that satisfies these inequalities solves the consensus problem only one such $(h, \lambda)$ pair.
To solve the problem for all possible $(h, \lambda)$, we must minimize the left sides and maximize the right sides of the inequalities. This can be done by writing $h = \hmax$ and $\lambda = \lambda_N$ for the left sides and $h = 0$ and $\lambda = \lambda_2$ for the right sides. Putting all of them together we finally obtain the following inequalities:
\begin{align}
	\frac{2 (\mu_2 - \mu_1)}{\hmax \lambda_N (\mu_1 + \mu_2 + \hmax)} &> k_1 > 0 \label{ineq:main1} \\
	\min \left\{ \frac{2}{\hmax \lambda_N}, \frac{4}{\lambda_N (2 \mu_1 + \hmax)} \right\} &> k_2 > \frac{4}{\lambda_2 (\mu_1 + \mu_2)} \label{ineq:main2} \\
	\frac{4}{\lambda_N (\mu_1 + \mu_2 + \hmax)} &> k_2 - k_1 > 0 \label{ineq:main3}
\end{align}

We can conclude that the controller $K = \mat{k_1 & k_2} T^{-1}$ solves the consensus problem for the double integrator agent dynamics for arbitrary selection of sampling intervals and changing balanced topologies such that $h_k \in (0, \hmax)$ and $\lambda \in [\lambda_2, \lambda_N]$ when $k_1,k_2$ satisfies \eqref{ineq:main1}-\eqref{ineq:main3}.

Now we can analyze the consistency of these inequalities.
\begin{lemma} \label{lem:ineq_consistency}
	Let $a,b,c,d > 0$ and
	\begin{align*}
		a &> k_1 > 0 \\
		b &> k_2 > c \\
		d &> k_2 - k_1 > 0
	\end{align*}
	Then the inequalities are consistent, i.e. there exists $k_1,k_2$ satisfying them, if and only if $b > c$ and $a + d > c$.
\end{lemma}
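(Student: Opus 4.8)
The plan is to treat this as a linear feasibility question about the triple of constraints on the pair $(k_1, k_2)$ and to prove the two implications of the biconditional separately.

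For the necessity direction, I would assume a feasible pair $(k_1, k_2)$ exists and read off the two claimed inequalities directly. The condition $b > c$ is immediate from the chain $b > k_2 > c$. For $a + d > c$, I would add the two upper bounds $k_1 < a$ and $k_2 - k_1 < d$ to obtain $k_2 < a + d$, and then combine this with $k_2 > c$ to conclude $c < a + d$.

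For the sufficiency direction, I would assume $b > c$ and $a + d > c$ and construct a feasible pair by choosing the variables in a deliberate order. First I would select $k_2$ in the interval $(c, \min\{b, a+d\})$; this interval is nonempty precisely because $c < b$ and $c < a+d$, which are exactly the two hypotheses. With $k_2$ now fixed, the remaining constraints $0 < k_1 < a$ and $0 < k_2 - k_1 < d$ rearrange to $\max\{0, k_2 - d\} < k_1 < \min\{a, k_2\}$, and I would confirm this interval is nonempty by checking the four inequalities between its lower and upper endpoints: $0 < a$ and $0 < d$ hold by hypothesis, $0 < k_2$ follows from $k_2 > c > 0$, and $k_2 - d < a$ follows from the choice $k_2 < a + d$. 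Any $k_1$ in this interval, together with the chosen $k_2$, then satisfies all three original constraints.

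The argument is essentially elementary interval bookkeeping, so I do not expect a deep obstacle. The one subtlety, and the place I would be most careful, lies in the sufficiency direction: the order of selection matters. Fixing $k_1$ first does not cleanly decouple the constraints, whereas fixing $k_2$ first collapses the remaining feasibility for $k_1$ into a single nonemptiness check, and it is exactly there that the two hypotheses $b > c$ and $a + d > c$ each get used once.
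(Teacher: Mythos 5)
Your proof is correct. The necessity direction is identical to the paper's: $b>c$ is read off from $b>k_2>c$, and adding $k_1<a$ to $k_2-k_1<d$ gives $c<k_2<a+d$. In the sufficiency direction you take a genuinely different, and arguably cleaner, construction: you fix $k_2\in\bigl(c,\min\{b,a+d\}\bigr)$ first, which is nonempty exactly by the two hypotheses, and then reduce the remaining feasibility for $k_1$ to the single nonemptiness check $\max\{0,k_2-d\}<\min\{a,k_2\}$, verified uniformly with no case analysis. The paper instead splits into the cases $d\geq c$ and $c>d$, fixing $k_1=\alpha\in(0,\min\{a,c\})$ in the first case and fixing the difference $k_2-k_1=d-\epsilon$ with $0<\epsilon<a+d-c$ in the second, then checking that the induced interval for the remaining variable is nonempty. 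Your ordering buys a uniform argument and also sidesteps a small implicit requirement in the paper's second case: there one must additionally take $\epsilon<d$ so that $k_2-k_1=d-\epsilon$ remains positive, which the stated bound $\epsilon<a+d-c$ does not by itself guarantee when $a>c$. Both proofs are elementary interval bookkeeping and both are valid; yours is the more streamlined of the two.
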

\begin{proof}
	($\Rightarrow$) Assume the inequalities are consistent, then obviously $b>c$. By adding the first and third one we obtain $a+d> k_2 >c$.
	
	($\Leftarrow$) Let $b > c$ and $a + d > c$. If $d \geq c$, select $k_1 = \alpha$ where $\min\{a,c\} > \alpha > 0$, then by the second and third inequalities we have
	\[\min\{d + \alpha, b\} > k_2 > \max\{c, \alpha\} = c\]
	and the inequalities are consistent. If $c > d$, select $k_2 - k_1 = d - \epsilon$ where $a + d - c > \epsilon > 0$, then by the first and second inequalities we have 
	\[ \min\{a, b-d+\epsilon \} > k_1 > \max\{c-d+\epsilon, 0\} = c-d+\epsilon \]
	and the inequalities are consistent.
\end{proof}

\begin{corollary} \label{cor:ineq_consistent}
	Inequalities \eqref{ineq:main1}-\eqref{ineq:main3} are consistent if and only if
	\begin{equation} \label{ineq:necessary}
		\frac{\mu_1 + \mu_2}{\hmax + \max\{\hmax, 2 \mu_1\}} > \frac{\lambda_N}{\lambda_2}
	\end{equation}
\end{corollary}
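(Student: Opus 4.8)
The plan is to apply Lemma \ref{lem:ineq_consistency} directly, matching the constants
\[ a = \frac{2(\mu_2-\mu_1)}{\hmax\lambda_N(\mu_1+\mu_2+\hmax)}, \quad b = \min\left\{\frac{2}{\hmax\lambda_N},\ \frac{4}{\lambda_N(2\mu_1+\hmax)}\right\}, \]
\[ c = \frac{4}{\lambda_2(\mu_1+\mu_2)}, \quad d = \frac{4}{\lambda_N(\mu_1+\mu_2+\hmax)} \]
read off from \eqref{ineq:main1}--\eqref{ineq:main3}. These are all positive and the triple of inequalities is exactly in the form of Lemma \ref{lem:ineq_consistency}, so the lemma gives consistency if and only if $b>c$ and $a+d>c$. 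The whole task then reduces to showing that these two conditions collapse to the single inequality \eqref{ineq:necessary}.

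First I would handle $b>c$. Writing $b$ as a minimum, $b>c$ is equivalent to the pair $\frac{2}{\hmax\lambda_N}>c$ and $\frac{4}{\lambda_N(2\mu_1+\hmax)}>c$. Cross-multiplying each (all quantities positive) turns them into $\frac{\mu_1+\mu_2}{2\hmax}>\frac{\lambda_N}{\lambda_2}$ and $\frac{\mu_1+\mu_2}{2\mu_1+\hmax}>\frac{\lambda_N}{\lambda_2}$ respectively. Both hold simultaneously iff the smaller left-hand side exceeds $\lambda_N/\lambda_2$, i.e. iff $\frac{\mu_1+\mu_2}{\max\{2\hmax,\,2\mu_1+\hmax\}}>\frac{\lambda_N}{\lambda_2}$, and the identity $\max\{2\hmax,\,2\mu_1+\hmax\}=\hmax+\max\{\hmax,2\mu_1\}$ rewrites this as exactly \eqref{ineq:necessary}. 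Hence $b>c$ is equivalent to \eqref{ineq:necessary}.

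It remains to show that $a+d>c$ is implied by $b>c$, which I would do by proving the stronger statement $a+d\ge b$; then $b>c$ forces $a+d\ge b>c$ and the condition $a+d>c$ is redundant. Here I split on the sign of $\hmax-2\mu_1$. Comparing $b_1:=\frac{2}{\hmax\lambda_N}$ and $b_2:=\frac{4}{\lambda_N(2\mu_1+\hmax)}$ shows $b=b_1$ when $\hmax\ge 2\mu_1$ and $b=b_2$ when $\hmax\le 2\mu_1$. Forming the differences over a common denominator, routine simplification gives $a+d-b_1=\frac{2(\hmax-2\mu_1)}{\hmax\lambda_N(\mu_1+\mu_2+\hmax)}$ and $a+d-b_2=\frac{2(\mu_2-\mu_1)(2\mu_1-\hmax)}{\hmax\lambda_N(\mu_1+\mu_2+\hmax)(2\mu_1+\hmax)}$. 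The useful coincidence is that the same threshold $\hmax=2\mu_1$ that decides which of $b_1,b_2$ is the minimum also makes the relevant difference nonnegative: in the regime $\hmax\ge 2\mu_1$ the first difference is $\ge 0$ while $b=b_1$, and in the regime $\hmax\le 2\mu_1$ the second is $\ge 0$ (using $\mu_2>\mu_1$) while $b=b_2$. In either case $a+d\ge b$, which completes the reduction and yields the claimed equivalence.

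The main obstacle I anticipate is not conceptual but the bookkeeping in the last step: getting the two difference expressions to factor cleanly and noticing that the case split governing the minimum $b=\min\{b_1,b_2\}$ coincides with the case split governing the sign of $a+d-b$. Once that coincidence is spotted the argument is immediate; without it one might worry that $a+d>c$ imposes a genuinely new constraint beyond \eqref{ineq:necessary}.
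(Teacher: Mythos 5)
Your proposal is correct and follows essentially the same route as the paper: invoke Lemma \ref{lem:ineq_consistency} with the same constants $a,b,c,d$, identify $b>c$ with \eqref{ineq:necessary}, and dispose of $a+d>c$ by proving $a+d\ge b$ via the same case split on $\hmax$ versus $2\mu_1$. Your factored expressions for $a+d-b_1$ and $a+d-b_2$ check out, so the argument is complete.
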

\begin{proof}
	Let
	\begin{align*}
		a &:= \frac{2 (\mu_2 - \mu_1)}{\hmax \lambda_N (\mu_1 + \mu_2 + \hmax)},~~ &&b := \frac{4}{\lambda_N \left( \hmax + \max\{\hmax, 2 \mu_1\} \right)},\\
		c &:= \frac{4}{\lambda_2 (\mu_1 + \mu_2)},~~ &&d := \frac{4}{\lambda_N (\mu_1 + \mu_2 + \hmax)}
	\end{align*}
	According to Lemma \ref{lem:ineq_consistency}, inequalities \eqref{ineq:main1}-\eqref{ineq:main3} are consistent if and only if $b > c$ and $a+d>c$.
	It is easy to see that $b > c$ is equivalent to \eqref{ineq:necessary}. We are going to show that $a + d \geq b$ to conclude the proof. Therefore, we need
	\begin{align*}
		\frac{4 \hmax + 2 (\mu_2 - \mu_1)}{\hmax \lambda_N (\mu_1 + \mu_2 + \hmax)} &\geq \frac{4}{\lambda_N \left( \hmax + \max\{\hmax, 2 \mu_1\} \right)} \\
		\max\{\hmax, 2 \mu_1\} &\geq \frac{2 \hmax (\mu_1 + \mu_2 + \hmax)}{2 \hmax + \mu_2 - \mu_1} - \hmax \\
		&= \frac{\hmax (3 \mu_1 + \mu_2)}{2 \hmax + \mu_2 - \mu_1}
	\end{align*}
	First assume $\hmax \geq 2 \mu_1$, then
	\begin{align*}
		2 \hmax^2 &\geq 4 \hmax \mu_1 \\
		2 \hmax^2 + \hmax \mu_2 - \hmax \mu_1 &\geq 3 \hmax \mu_1 + \hmax \mu_2 \\
		 \hmax &\geq \frac{\hmax (3 \mu_1 + \mu_2)}{2 \hmax + \mu_2 - \mu_1}
	\end{align*}
	Now assume $2 \mu_1 \geq \hmax$, then
	\begin{align*}
		2 \mu_1 (\mu_2 - \mu_1) &\geq \hmax \mu_2 - \hmax \mu_1 \\
		2 \mu_1 (2 \hmax + \mu_2 - \mu_1) &\geq \hmax \mu_2 + 3 \hmax \mu_1 \\
		2 \mu_1 &\geq \frac{\hmax (3 \mu_1 + \mu_2)}{2 \hmax + \mu_2 - \mu_1} \qedhere
	\end{align*}
\end{proof}

%\todo{Argue that $\lambda_N / \lambda_2$ is like the condition number of a graph (find references). It can be a design parameter so that even if the topology changes the consensus is reached as long as that the graph has a lower condition number than designed.}

%$s_{12}$ can be either positive or negative, so we need to consider two cases.
%\subsection{Case I: $s_{12} > 0$}
%In this case we have the following conditions:
%\begin{align}
%	s_{12} &= \frac{2 h}{\mu_2 - \mu_1} - \frac{h \lambda \gamma k_2}{2} > 0 &&\Rightarrow \frac{4}{\lambda (\mu_1 + \mu_2 + h)} > k_2 \\
%	\abs{s_{11}} + \abs{s_{12}} &= 1 + \frac{2 h}{\mu_2 - \mu_1} - \frac{h \lambda \gamma}{2} (k_1 + k_2) < 1 &&\Rightarrow \frac{4}{\lambda (\mu_1 + \mu_2 + h)} > k_1 + k_2 \\
%	\abs{s_{22}} + \abs{s_{12}} &= 1 + \frac{2 h}{\mu_2 - \mu_1} - \frac{h \lambda (\gamma + 1) k_2}{2} < 1 &&\Rightarrow k_2 > \frac{4}{\lambda (2 \mu_2 + h)}
%\end{align}

	\section{Numerical Examples}
Given $\hmax$, $\lambda_2$ and $\lambda_N$, let $a,b,c,d$ represents the limit values of inequalities \eqref{ineq:main1} - \eqref{ineq:main3} as defined in the proof of Corollary \ref{cor:ineq_consistent}. The following algorithm is used to calculate $K$ that solves the consensus problem for the double integrator agent dynamics:
\begin{algorithm}
	\caption{Stabilizing controller design algorithm}
	\begin{algorithmic}[0]
		\State $\mu_1 \gets \hmax / 2$
		\State $\mu_2 \gets -\mu_1 + 2 \hmax \lambda_N / \lambda_2 + 1$ \Comment{See \eqref{ineq:necessary}}
%		\State $\displaystyle a \gets \frac{2 (\mu_2 - \mu_1)}{\hmax (\mu_1 + \mu_2 + \hmax) \lmax}$ \Comment{See \eqref{ineq:main1}}
%		\State $\displaystyle b \gets \frac{2}{\hmax \lmax}$ \Comment{See \eqref{ineq:main2}}
%		\State $\displaystyle c \gets \frac{4}{(\mu_1 + \mu_2) \lmin}$ \Comment{See \eqref{ineq:main2}}
%		\State $\displaystyle d \gets \frac{4}{(\mu_1 + \mu_2 + \hmax) \lmax}$ \Comment{See \eqref{ineq:main3}}
		\State $\Delta k \gets 0.9 d$
		\State $k_1 \gets \left( \min\{a, b - \Delta k\} + \max\{ 0, c - \Delta k \} \right) / 2$
		\State $k_2 \gets k_1 + \Delta k$
		\State $K \gets \mat{k_1 & k_2} T^{-1}$
	\end{algorithmic}
\end{algorithm}

\begin{eexample}
	We consider the following topologies. The sampling intervals are generated randomly from $(0,3)$ and at every 50th step one of the following topologies are selected randomly. $\lambda_2 = 0.3$ and $\lambda_N = 6$ selected. $\max_{ij} \abs{x_i(t_k) - x_j(t_k)}$,  $i,j = 1,\dots,N$ is plotted against the time step $k$ for 100 simulations. The results are shown in Figure \ref{fig:ex1}.
	\begin{figure}[!ht]
		\centering
		\includegraphics[width=\columnwidth]{./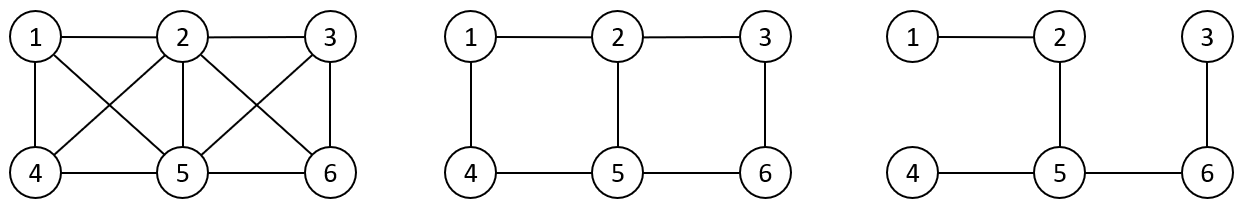}
	\end{figure}
	%\vspace*{-3em}
	\begin{figure}[!ht]
		\centering
		\includegraphics[width=.9\columnwidth]{./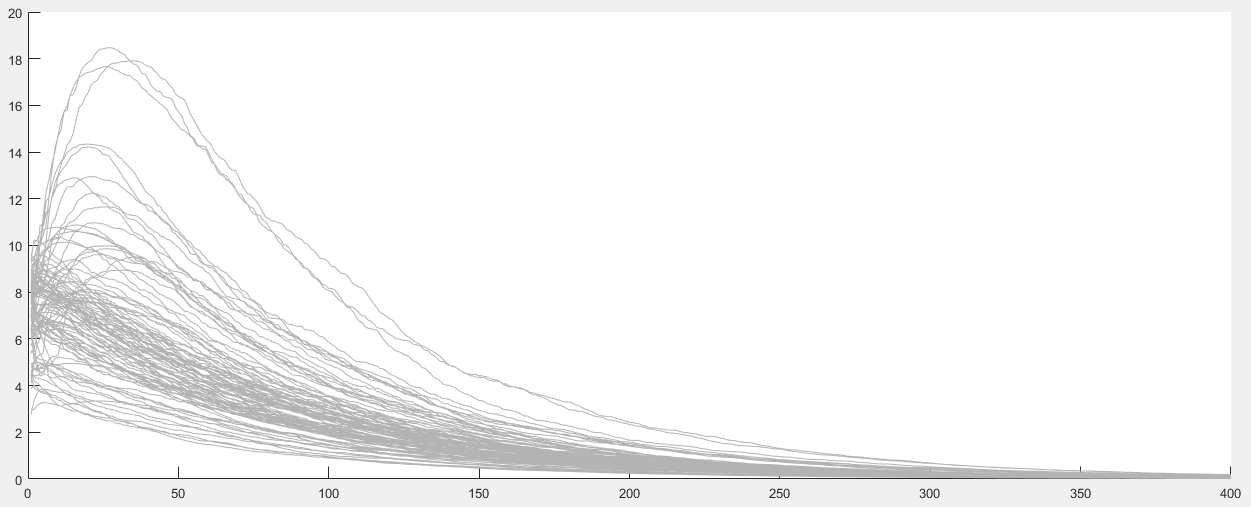}
		\caption{$\max_{ij} \abs{x_i(t_k) - x_j(t_k)}$ for the controller $K = \mat{0.0009 & 0.1093}$}
		\label{fig:ex1}
	\end{figure}
\end{eexample}

%\vspace*{-2em}
\begin{eexample}
	We consider 100 agents with random topology changing at every 50th step where $\lambda_2 = 5, \lambda_N = 60$ and $\hmax = 1$. $\max_{ij} \abs{x_i(t_k) - x_j(t_k)}, i,j = 1,\dots,N$ is plotted against the time step $k$ for 100 simulations. The results are shown in Figure \ref{fig:ex2}.
	\begin{figure}[!ht]
		\centering
		\includegraphics[width=.9\columnwidth]{./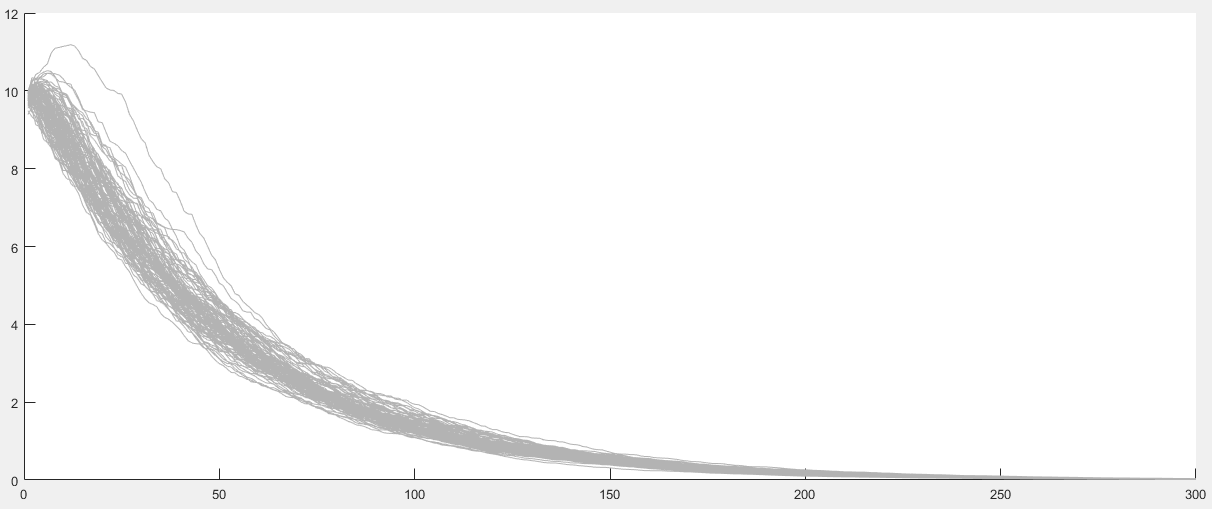}
		\caption{$\max_{ij} \abs{x_i(t_k) - x_j(t_k)}$ for the controller $K = \mat{0.0013 & 0.032}$}
		\label{fig:ex2}
	\end{figure}
\end{eexample}

%\begin{figure}
%	\centering
%	\begin{subfigure}[b]{0.49\textwidth}
%		\centering
%		\includegraphics[width=\textwidth]{./fig/ex1.png}
%	\end{subfigure}
%	\hfill
%	\begin{subfigure}[b]{0.49\textwidth}
%		\centering
%		\includegraphics[width=\textwidth]{./fig/ex1.png}
%	\end{subfigure}
%\end{figure}

%\begin{align*}
%	N &= 100 \\
%	\hmax &= 1 \\
%	0.1 &\leq \lambda_2 \\
%	\lambda_N &\leq 40 \\
%	\lambda_N / \lambda_2 &\leq 100% \\
%	%K &= \mat{0.0011 & 0.1741}
%\end{align*}

	\section{Conclusion}
A sufficient condition is given for checking whether a distributed state feedback control law solves the asymptotic consensus problem for the general linear dynamics in the case of nonuniform sampling. The condition is based on the stabilization problem of a specific subsystem, which is equivalent to the consensus problem. It depends on transforming the state coordinates, in which the amplitude of the state vector gets smaller at every step. It is shown that given condition immediately generalizes to the changing topology in the case of balanced networks.

Simple and explicit inequalities are given to design a controller in the case of double integrator dynamics and changing topology with balanced network graphs. These inequalities depend on the maximum sampling time and the interval that contains all eigenvalues of the changing topology graph Laplacian eigenvalues. It is also shown that such a controller always exist as long as the topology graphs have a spanning tree. Numerical examples are given to demonstrate the accuracy of the theoretical results.

It would be interesting to study the general graph case for double integrator systems using this method in the future. Also, it can possibly be extended when the measurements are transmitted asynchronously to the neighbor agents.

	%\input{appendix.tex}
	
%	\section*{Disclosure statement}
%	No potential conflict of interest was reported by the author(s).
%	
%	\section*{Data availability statement}
%	The authors confirm that the data supporting the findings of this study are available within the article.	

	\def\url#1{}
	\bibliographystyle{plain}
	%\bibliographystyle{hindawi_bib_style}
	%\bibliographystyle{apacite}
	%\bibliography{references}

\end{document}